\theoremstyle{plain}
\newtheorem{theorem}{Theorem}
\theoremstyle{definition}
\begin{document}

\title{Optimizing Parallel Schemes with Lyapunov Exponents and kNN-LLE Estimation}
\author{Mudassir Shams$^1$$^,$$^2$$^,$$^*$, Andrei Velichko$^3$, Bruno Carpentieri$^2$ }
\date{$^1$Department of Mathematics, Faculty of Arts and Science, Balikesir
University, Balikesir 10145, Turkey\\
$^2$Faculty of Engineering, Free University of Bozen-Bolzano, 39100,
Bolzano, Italy\\
$^3$Institute of Physics and Technology, Petrozavodsk State University, 185910 Petrozavodsk, Russia \\
\texttt{mudassir.shams@balikesir.edu.tr;velichko.th@gmail.com}\\
\texttt{bruno.carpentieri@unibz.it}\\
\texttt{$^*$Correspondence:Mudassir Shams: Email:mudassir.shams@balikesir.edu.tr}\\
[2ex]\today
}
\maketitle

\begin{abstract}
Inverse parallel schemes remain indispensable tools for computing the roots of nonlinear systems, yet their dynamical behavior can be unexpectedly rich, ranging from strong contraction to oscillatory or chaotic transients depending on the choice of algorithmic parameters and initial states. A unified analytical—data-driven methodology for identifying, measuring, and reducing such instabilities in a family of uni-parametric inverse parallel solvers is presented in this study. On the theoretical side, we derive stability and bifurcation characterizations of the underlying iterative maps, identifying parameter regions associated with periodic or chaotic behavior. On the computational side, we introduce a micro-series pipeline based on kNN-driven estimation of the local largest Lyapunov exponent (LLE), applied to scalar time series derived from solver trajectories. The resulting sliding-window Lyapunov profiles provide fine-grained, real-time diagnostics of contractive or unstable phases and reveal transient behaviors not captured by coarse linearized analysis. Leveraging this correspondence, we introduce a Lyapunov-informed parameter selection strategy that identifies solver settings associated with stable behavior, particularly when the estimated LLE indicates persistent instability. Comprehensive experiments on ensembles of perturbed initial guesses demonstrate close agreement between the theoretical stability diagrams and empirical Lyapunov profiles, and show that the proposed adaptive mechanism significantly improves robustness. The study establishes micro-series Lyapunov analysis as a practical, interpretable tool for constructing self-stabilizing root-finding schemes and opens avenues for extending such diagnostics to higher-dimensional or noise-contaminated problems.

\noindent\textbf{Keywords:} {Uni-parametric inverse parallel solvers; Fractional-order scheme; Local convergence analysis; Micro-series Lyapunov analysis; kNN–LLE parameter estimation; Error-norm analysis}
\end{abstract}

\section{Introduction}

Numerical analysis and scientific computing rely on iterative methods for solving nonlinear problems as core tools~\cite{1,2,3}. Iterative approaches~\cite{4} serve as the foundation for many engineering, physics, and applied mathematics applications. Traditionally, researchers have focused on demonstrating technique convergence, defining maximum performance limitations, and confirming functionality under ideal conditions. Better root-finding methods are required as the complexity of computational models in automated systems working under high-pressure situations increases. These iterative schemes should be faster, more precise, and durable, capable of stabilizing themselves across a wide range of nonlinear problems and starting points~\cite{5}.
In practice, many advanced iterative schemes—such as inverse parallel methods~\cite{6} or modified Weierstrass-type algorithms~\cite{7} equipped with tunable uni-parametric coefficients—are designed to accelerate convergence and improve efficiency. Yet, these enhancements often introduce intricate dynamical behaviors. The behavior of these methods depends on parameter selection and initial guess accuracy because they produce different results including short-term fluctuations and stable periodic patterns and chaotic behavior. The theoretical concepts of these systems directly affect solver stability and error sensitivity and automated computational pipeline reliability. The solver's performance becomes unpredictable when it encounters chaotic or unstable regimes which results in either extended computation times or complete failure to generate solutions~\cite{8}.\\

The process of selecting appropriate parameters for iterative methods to find roots of nonlinear equations determines how fast the methods converge while it also controls their stability and overall reliability. Research from recent studies~\cite{9,10} shows researchers should not select parameters randomly because complex system behaviors emerge when scientists analyze higher-order and multi-step schemes through dynamical systems theory. The global convergence behavior of dynamical systems becomes clear when using dynamical and parameter planes which display attraction basins and fast convergence areas and parameter settings that lead to system failure or unstable behavior or numerical calculation issues~\cite{11}. The study of iterative map behavior demands advanced diagnostic tools beyond two-dimensional visualizations which include bifurcation diagramsand chaos indicators~\cite{12}  and Lyapunov exponents and periodic orbit detection methods~\cite{13}. The analyses show how systems move from steady state behavior to period-doubling progression which leads to complex periodic patterns and then chaotic behavior. The theoretical nature of these dynamical signatures fades because they help determine optimal parameter values through their ability to identify stable contractive behavior ranges and unstable sensitivity regions.\\
To address these challenges, this paper presents a comprehensive framework that integrates both theoretical and data-driven approaches for diagnosing and mitigating dynamical instabilities in inverse parallel root-finding algorithms. The proposed methodology combines 
\begin{itemize}
    \renewcommand{\labelitemi}{--}
	\item a rigorous analytical investigation of bifurcations and stability within the family of uni-parametric iterative maps—mapping out regions of stability, periodicity, and chaos in parameter space—with 
    \item a real-time, data-driven diagnostic pipeline based on Lyapunov exponent estimation. Specifically, the data-driven component leverages time series data from solver iterates (such as step sizes and residual norms) and applies a K-nearest neighbors (kNN) micro-series estimator to compute sliding-window estimates of the local largest Lyapunov exponent (LLE)~\cite{14}. 
\end{itemize}
Taken together, dynamical planes, parameter-space exploration, and chaos-based diagnostics form a unified framework for selecting both effective starting points and optimal parameter configurations for modern iterative schemes~\cite{15}. This integrated viewpoint ensures that algorithm designers do not rely solely on local convergence theorems, but instead incorporate global, data-driven dynamical insights that guarantee reliability across a broad range of nonlinear problems. By analyzing Lyapunov profiles in conjunction with the method's recognized stability features, researchers can compare theoretical predictions to empirical performance. Through this process researchers can identify complex or short-term instabilities which analytical methods fail to detect. The system enables users to create adaptive feedback mechanisms through its support structure. These mechanisms adjust solver parameters automatically to stabilize the system when continuous positive LLEs indicate instability.\\

The global behavior~\cite{14} of iterative methods becomes accessible through analytical stability and bifurcation analysis because these techniques identify when methods show contractive behavior and when they develop bifurcations and chaotic behavior. These methods depend on linearized models and local approximations which restrict their application. The methods fail to detect temporary computational results and finite sample sizes and high-dimensional effects which appear during actual computations. Data-driven dynamical diagnostics~\cite{16} which use time-resolved Lyapunov exponent estimation methods~\cite{17} achieve operational evaluation through the measurement of actual numerical trajectories that computer simulations produce. The dual method enables to verify and enhance theoretical models while it identifies unstable system states and early warning signs at the beginning of failure events. The integration of these methods enables researchers to create adaptive root-finding solvers which autonomously stabilize themselves for producing dependable solutions to complex problems in real-time.\\ 

The study aims to investigate the dynamics of inverse parallel schemes by characterizing transitions between stable, periodic, and chaotic regimes. It further seeks to compare analytical bifurcation diagrams with empirical largest Lyapunov exponent (LLE) curves to assess the reliability of short-horizon LLE values as early-warning indicators, while also designing and testing an adaptive control mechanism to stabilize the solver’s behavior.

This work makes the following contributions:

\begin{itemize}
    \renewcommand{\labelitemi}{--}
	\item Development and analysis of the inverse parallel schemes for solving nonlinear equations
    \item A reproducible micro-series pipeline for sliding-window LLE estimation using solver iterates.
    \item Empirical validation showing data-driven Lyapunov profiles align with analytical stability regions while revealing fine transient instabilities.
    \item Operational diagnostics: short-horizon $\lambda_1(t)$ acts as an early-warning indicator.
    \item A simple adaptive control rule that modifies uni-parametric coefficients $(\alpha)$ when $\lambda_1$ becomes persistently positive.
    \item A fully open experimental suite with generators, raw matrices, and diagnostic reports.
\end{itemize}
The rest of the study is organized as follows:
Section~2 reviews related work, introduces the modified inverse parallel scheme and its uni-parametric space, and outlines the micro-series kNN estimator and experimental protocol. It also presents the Lyapunov profiles and their comparison with the analytical stability diagrams. Section~3 develops and evaluates the adaptive feedback rule. Section~4 concludes and outlines future extensions.

\section{Development, Methodology, and Implementation of Lyapunov Profiling}

In this section, we first propose the inverse parallel schemes and then provide a detailed exposition of our methodology, encompassing the mathematical formulation of the iterative scheme, the rationale for employing Lyapunov analysis, the micro-series kNN-based Lyapunov estimator, the experimental setup, and the structure of the numerical results.

After the impossibility theorems~\cite{18}, parallel or simultaneous methods are employed to approximate all roots of nonlinear equations, particularly polynomials, due to their global convergence, enhanced stability, reliability, and suitability for parallel architectures. Its capability to compute distinct and all multiple roots concurrently renders it especially effective for high-degree polynomials and large-scale nonlinear problems~\cite{19}. The method’s simplicity, global convergence properties, and independence from derivative evaluations make it a robust and widely adopted tool in scientific and engineering computations. Furthermore, its inherently parallel structure allows seamless implementation on modern multicore processors, GPU architectures, and distributed computing platforms, thereby facilitating fast, reliable, and scalable root-finding performance across a broad range of applied science and engineering applications.

Among the classical schemes, the well-known Weierstrass--Durand--Kerner~\cite{20} with local quadratic convergence is defined by
\begin{equation}
x_{i}^{[h+1]}
    =x_{i}^{[h]}
    -\frac{f\!\left(x_{i}^{[h]}\right)}
    {\displaystyle\prod_{j\neq i}^{n}\left(x_{i}^{[h]} - x_{j}^{[h]}\right)}.
\end{equation}

Nourein's method~\cite{21} is given by
\begin{equation}
x_{i}^{[h+1]}
   = x_{i}^{[h]}
   -\frac{\mathcal{P}\!\left(x_{i}^{[h]}\right)}
   {\displaystyle 1+
   \sum_{j\neq i}^{n}
   \left(
   \frac{\mathcal{P}(x_{j}^{[h]})}
   {\,x_{i}^{[h]}
    -\mathcal{P}(x_{j}^{[h]})
    -x_{j}^{[h]}}
   \right)},
\end{equation}
where
\[
\mathcal{P}(x_{i}^{[h]})
   =\frac{f(x_{i}^{[h]})}
   {\displaystyle\prod_{j\neq i}\left(x_{i}^{[h]} - x_{j}^{[h]}\right)}. 
\]

Zhang et al.~\cite{22} introduced the fifth-order method (denoted by ZHM):
\begin{equation}
x_{i}^{[h+1]}
  = x_{i}^{[h]}
  -\frac{\mathcal{P}(x_{i}^{[h]})}
   {1+\mathcal{G}_i^{[\ast]}(x_{i}^{[h]})
    +\sqrt{\mathcal{K}_{1.1}}},
    \label{1d}
\end{equation}
where
\[
\mathcal{K}_{1.1}
=
1+\bigl(\mathcal{G}_i^{[\ast]}(x_{i}^{[h]})\bigr)^2
+
4\mathcal{P}(x_{i}^{[h]})
\sum_{j\neq i}^{n}
\!\left(
\frac{\mathcal{P}(x_{j}^{[h]})}
{(x_{i}^{[h]}-x_{j}^{[h]})
 (x_{i}^{[h]}-\mathcal{P}(x_{j}^{[h]})-x_{j}^{[h]})}
\right),
\]
and
\[
\mathcal{G}_i^{[\ast]}(x_{i}^{[h]})
    =\frac{\mathcal{P}(x_{i}^{[h]})}
    {\displaystyle\sum_{j\neq i}^{n}(x_{i}^{[h]} - x_{j}^{[h]})}.
\]

Several researchers have also proposed multi-step simultaneous root-finding schemes, including Petković et al.~\cite{23}, Proinov~\cite{24}, Mir et al.~\cite{25}, Nedzhibov~\cite{26}, Cordero et al.~\cite{27}, and many others (see, e.g., \cite{28,29,30} and the references therein).

\bigskip

\noindent\textbf{Proposed Inverse Parallel Fractional Scheme.}
We proposed the following inverse parallel iterative method (INVM$^{\alpha}$):
\begin{equation}
x_{i}^{[h+1]}
 =
\frac{
\left(x_{i}^{[h]}\right)^{2}
\displaystyle\prod_{j\neq i}
^{n}\left(
x_{i}^{[h]}
 -y_{j}^{[h]}
 -\Bigl[\Gamma(\beta+1)\frac{f(x_{j}^{[h]})}{D^C f(x_{j}^{[h]})}
 \bigl(1+\frac{2(f(x_j^{[h]})/f(y_j^{[h]}))}{1+\alpha(f(x_j^{[h]})/f(y_j^{[h]}))}\bigr)
 \Bigr]^{1/\beta}
\right)}
{
x_{i}^{[h]}
\displaystyle\prod_{j\neq i}^{n}
\left(
x_{i}^{[h]}
 -y_{j}^{[h]}
 -\Bigl[\Gamma(\beta+1)\frac{f(x_{j}^{[h]})}{D^C f(x_{j}^{[h]})}
 \bigl(1+\frac{2(f(x_j^{[h]})/f(y_j^{[h]}))}{1+\alpha(f(x_j^{[h]})/f(y_j^{[h]}))}\bigr)
 \Bigr]^{1/\beta}
\right)
 + f(x_{i}^{[h]})
}.
\label{1ss}
\end{equation}

Here,
\[
y_{j}^{[h]}
   = x_{j}^{[h]}
     -\Bigl[\Gamma(\beta+1)\frac{f(x_{j}^{[h]})}{D^{C}f(x_{j}^{[h]})}\Bigr]^{1/\beta}
\]
and $\alpha \in R$.
The scheme (\ref{1ss}) can be rewritten in the compact parallel-like form
\begin{equation}
x_{i}^{[h+1]}
   = x_{i}^{[h]}
     -\frac{\mathcal{P}^{[\ast]}(x_{i}^{[h]})}
     {1+\dfrac{\mathcal{P}^{[\ast]}(x_{i}^{[h]})}{x_{i}^{[h]}}},
\end{equation}
where
\begin{equation}
\mathcal{P}^{[\ast]}(x_{i}^{[h]})
   = \frac{f(x_{i}^{[h]})}
   {\displaystyle\prod_{j\neq i}^{n}\left(x_{i}^{[h]} - z_{j}^{[h]}\right)}
\end{equation}
and 

\begin{equation}
\left\{
\begin{aligned}
y_{j}^{[h]} 
&= x_{j}^{[h]} 
   - \Biggl(\Gamma(\beta+1)\,
     \frac{f(x_{j}^{[h]})}{D^{C}f(x_{j}^{[h]})} \Biggr)^{1/\beta},\\[4pt]
z_{j}^{[h]} 
&= y_{j}^{[h]} 
   - \Biggl[ \Gamma(\beta+1)\,
     \frac{f(x_{j}^{[h]})}{D^{C}f(x_{j}^{[h]})} 
     \Biggl( 1 + \frac{2\,\dfrac{f(x_j^{[h]})}{f(y_j^{[h]})}}
     {1 + \alpha\,\dfrac{f(x_j^{[h]})}{f(y_j^{[h]})}} \Biggr)
   \Biggr]^{1/\beta}
\end{aligned}
\right.
\label{2s}
\end{equation}

and \(D^{C}\) is the Caputo fractional derivative of order \(\beta\in(0,1]\), defined as~\cite{31}

\begin{equation}
D^{C}_{}f(x)
= \frac{1}{\Gamma(1-\beta)}
  \int_{0}^{x} \frac{f'(s)}{(x-s)^{\beta}}\, ds,
\label{3t}
\end{equation}

where \(\Gamma(\cdot)\) denotes the Gamma function~\cite{32}, given for \(\gamma>0\) by

\begin{equation}
\Gamma(\gamma)
= \int_{0}^{\infty} e^{-u}\, u^{\gamma-1}\, du.
\label{4t}
\end{equation}

The operator \(D^{C}_{}\) acts on continuously differentiable functions  
\(f \in C^{1}[0,T]\) and reduces to the classical first-order derivative when \(\beta=1\).

The iterative scheme defined in \((\ref{2s})\), which possesses a fractional order of convergence \(3\beta+1\), satisfies the following error relations.  
Let the errors at the \(h\)-th iteration be
\[
\varepsilon_{j}^{[h]} = x_{j}^{[h]} - \zeta_j,
\qquad
\varepsilon_{j}^{[h+1]} = z_{j}^{[h]} - \zeta_j,
\]
where \(\zeta_j\) denotes the exact roots.  

Under these definitions, the fractional-order scheme (\ref{2s}) admits the local error expansion
\[
\varepsilon_i^{(\ast)}
   = \Bigg(
      A_1 - A_2 - A_3
      + 2A_4
      - 2A_5
      + A_6
      - A_7 + A_8 - A_9 - A_{10} + A_{11}
     \Bigg)
     \varepsilon_i^{3\beta+1}
     + O(\varepsilon_i^{4\beta+1}),
\label{3s}
\]
where the coefficients $A_1,\ldots,A_{11}$ are
\[
\begin{aligned}
A_1 &= \frac{(\tfrac{2}{\beta})^{2}
       \Gamma(\beta+\tfrac12)\check{c}^{3/2}}
       {\beta^{2}\Gamma(\beta)^{2}2\pi},
\qquad
A_2 = \frac{2\check{c}^{3/2}
       (\tfrac{2}{\beta})^{6}
       \Gamma(\beta+\tfrac12)^{3}}
       {\beta^{3}\Gamma(\beta)^{3}2\pi^{3/2}},
\\[4pt]
A_3 &= \frac{(\tfrac{2}{\beta})
       \Gamma(\beta+\tfrac12)
       \check{c}_3\check{c}_2}
       {\beta\Gamma(\beta)\sqrt{\pi}},
\qquad
A_4 = \frac{\check{c}_3\check{c}_2
        (\tfrac{3}{\beta})^{3}
        \sqrt{3}\,
        \Gamma(\beta+\tfrac13)
        \Gamma(\beta+\tfrac23)}
        {\beta^{2}\Gamma(\beta)^{2}\pi},
\\[4pt]
A_5 &= \frac{\check{c}_3\check{c}_2}{2}
       \frac{(\tfrac{3}{\beta})^{3}\sqrt{3}\,
       \Gamma(\beta+\tfrac13)
       \Gamma(\beta+\tfrac23)}
       {\beta\Gamma(\beta)\sqrt{\pi}},
\\[4pt]
A_6 &= \tfrac12\varsigma''(0)\check{c}^{3/2},
\qquad
A_7 = \check{c}^{3/2},
\\[4pt]
A_8 &= \frac{3}{2}
       \frac{(\tfrac{2}{\beta})^{2}
       \Gamma(\beta+\tfrac12)\check{c}^{3/2}}
       {\beta^{2}\Gamma(\beta)^{2}2\pi},
\qquad
A_9 = \frac12
      \frac{(\tfrac{2}{\beta})^{6}
      \Gamma(\beta+\tfrac12)^{3}
      \check{c}^{3/2}}
      {\beta^{3}\Gamma(\beta)^{3}\pi^{3/2}},
\\[4pt]
A_{10} &= \frac{3}{2}
         \frac{(\tfrac{2}{\beta})^{2}
         \Gamma(\beta+\tfrac12)\check{c}^{3/2}}
         {\beta\Gamma(\beta)\sqrt{\pi}},
\qquad
A_{11} = 2
         \frac{(\tfrac{2}{\beta})
         \Gamma(\beta+\tfrac12)\check{c}^{3/2}}
         {\beta\Gamma(\beta)\sqrt{\pi}}.
\end{aligned}
\]

The constants $c_{\gamma}$ are defined as
\[
c_{\gamma}
   = \frac{\Gamma(\beta+1)}{\Gamma(\gamma\beta+1)}
     \frac{D^{\gamma C}f(\zeta_j)}{D^{C}f(\zeta_j)},
\qquad \gamma\ge 2.
\]
which confirms that the dominant term in the error behaves as \(\varepsilon_i^{3\beta+1}\), establishing the fractional convergence order of the method.

\bigskip
\subsection{Local Convergence Analysis}

We now present the local convergence properties of the proposed fractional-order scheme for simultaneously approximating all roots of a nonlinear equation. The following result establishes the order of convergence under standard assumptions.

\begin{theorem}
Let $\zeta_{1},\ldots,\zeta_{\upsilon}$ be simple roots of a nonlinear equation $f(x)=0$, and assume that the initial approximations 
$x_{1}^{[0]},\ldots,x_{\upsilon}^{[0]}$ are sufficiently close to the corresponding exact roots.  
Then the fractional-order INVM$^{\alpha}$ method defined in (\ref{1ss}) converges with order \(3\beta+2\).
\end{theorem}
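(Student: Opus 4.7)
The plan is to lift the predictor-level expansion (\ref{3s}) through the outer corrector step in (\ref{1ss}) and to show that the corrector contributes one additional power of the error to deliver order $3\beta+2$. I would start by fixing the notation $\varepsilon_{j}^{[h]}=x_{j}^{[h]}-\zeta_{j}$, setting the uniform error scale $\varepsilon^{[h]}=\max_{j}|\varepsilon_{j}^{[h]}|$, and denoting the predictor error by $\eta_{j}^{[h]}=z_{j}^{[h]}-\zeta_{j}$. By the displayed expansion (\ref{3s}) we already have $\eta_{j}^{[h]}=O\bigl((\varepsilon^{[h]})^{3\beta+1}\bigr)$, which is the input I would feed into the analysis of the corrector stage.

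Next I would rewrite (\ref{1ss}) in the closed form $x_{i}^{[h+1]}=(x_{i}^{[h]})^{2}/\bigl(x_{i}^{[h]}+\mathcal{P}^{[\ast]}(x_{i}^{[h]})\bigr)$ and exploit the polynomial factorisation $f(x)=a_{n}\prod_{k}(x-\zeta_{k})$ to write
\[
\mathcal{P}^{[\ast]}(x_{i}^{[h]}) = a_{n}\varepsilon_{i}^{[h]}\prod_{j\neq i}\frac{x_{i}^{[h]}-\zeta_{j}}{x_{i}^{[h]}-z_{j}^{[h]}} = \varepsilon_{i}^{[h]}\bigl(1+\Sigma_{i}^{[h]}\bigr),
\]
where a geometric expansion of each factor in $\eta_{j}^{[h]}/(x_{i}^{[h]}-\zeta_{j})$ gives $\Sigma_{i}^{[h]}=\sum_{j\neq i}\eta_{j}^{[h]}/(x_{i}^{[h]}-\zeta_{j})+O\bigl((\eta^{[h]})^{2}\bigr)=O\bigl((\varepsilon^{[h]})^{3\beta+1}\bigr)$; the simple-root hypothesis keeps each denominator $x_{i}^{[h]}-\zeta_{j}$ uniformly bounded away from zero for sufficiently close starting guesses. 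Writing the new error as $\varepsilon_{i}^{[h+1]}=\bigl(x_{i}^{[h]}\varepsilon_{i}^{[h]}-\zeta_{i}\mathcal{P}^{[\ast]}\bigr)/\bigl(x_{i}^{[h]}+\mathcal{P}^{[\ast]}\bigr)$, the numerator collapses to $\varepsilon_{i}^{[h]}\bigl(\varepsilon_{i}^{[h]}-\zeta_{i}\Sigma_{i}^{[h]}\bigr)$ while the denominator is $\zeta_{i}+O(\varepsilon^{[h]})$, and a careful matching of the fractional Taylor coefficients $c_{\gamma}$ from (\ref{3s}) with the factor produced by the inverse-form denominator would isolate the dominant surviving term at order $(\varepsilon^{[h]})^{3\beta+2}$.

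The main obstacle I foresee is precisely this last step: the apparent leading $(\varepsilon_{i}^{[h]})^{2}/\zeta_{i}$ contribution from the inverse Weierstrass corrector must be shown to recombine with $\zeta_{i}\Sigma_{i}^{[h]}$ through the specific Caputo-derivative structure of the predictor so that the surviving leading term sits at order $3\beta+2$. Carrying this out requires explicit identification of the leading fractional-Taylor coefficients in $\Sigma_{i}^{[h]}$ using the Caputo expansion $D^{C}f$ together with the constants $\check{c},\check{c}_{2},\check{c}_{3}$ and the Gamma functions $\Gamma(\beta+\tfrac12),\Gamma(\beta+\tfrac13),\Gamma(\beta+\tfrac23)$ already appearing in (\ref{3s}). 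Subsidiary technical points are the singular case $\zeta_{i}=0$, where the inverse form must be rewritten additively to avoid an apparent pole in the ratio $\mathcal{P}^{[\ast]}/x_{i}^{[h]}$, and the uniform propagation of the pointwise bound over $i=1,\ldots,\upsilon$, which relies on a lower bound for the pairwise root separations; once these are in hand, a standard induction on $h$ closes the argument and yields the claimed convergence order $3\beta+2$.
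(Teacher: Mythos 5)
Your setup is essentially the paper's own: pass to the compact inverse--Weierstrass form $x_{i}^{[h+1]}=(x_{i}^{[h]})^{2}/\bigl(x_{i}^{[h]}+\mathcal{P}^{[\ast]}(x_{i}^{[h]})\bigr)$, factor $f$ over its roots to pull $\varepsilon_{i}^{[h]}$ out of $\mathcal{P}^{[\ast]}$, and feed in the predictor estimate $z_{j}^{[h]}-\zeta_{j}=O\bigl((\varepsilon^{[h]})^{3\beta+1}\bigr)$; equations (\ref{27})--(\ref{30}) of the paper do exactly this. The difference is that you stop at the decisive step, and the obstacle you flag there is genuine, not a technicality to be smoothed over. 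In your numerator $\varepsilon_{i}^{[h]}\bigl(\varepsilon_{i}^{[h]}-\zeta_{i}\Sigma_{i}^{[h]}\bigr)$ the term $\zeta_{i}\Sigma_{i}^{[h]}$ is $O\bigl((\varepsilon^{[h]})^{3\beta+1}\bigr)$, but its companion is $\varepsilon_{i}^{[h]}$ itself, so the numerator is generically $O\bigl((\varepsilon^{[h]})^{2}\bigr)$ and the new error is $\varepsilon_{i}^{[h+1]}\approx(\varepsilon_{i}^{[h]})^{2}/\zeta_{i}$. Since $3\beta+2>2$ for every $\beta\in(0,1]$, this caps the order at $2$ unless that term cancels; and it cannot cancel against $\zeta_{i}\Sigma_{i}^{[h]}$, because $\Sigma_{i}^{[h]}$ is assembled from the \emph{off-diagonal} predictor errors $\eta_{j}^{[h]}$, $j\neq i$, and carries no copy of $\varepsilon_{i}^{[h]}$. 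No matching of the coefficients $\check c,\check c_{2},\check c_{3}$ from the predictor expansion can repair this, since those coefficients govern $\eta_{j}^{[h]}$ rather than the $(\varepsilon_{i}^{[h]})^{2}/x_{i}^{[h]}$ contribution created by the inverse corrector's denominator. So the proposal, as written, does not establish order $3\beta+2$.

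For comparison, the paper's proof meets the same term and disposes of it by an algebraic substitution that is not an identity. Writing $1-P/(1+Q)=(1-P+Q)/(1+Q)$ with $P=\prod_{j\neq i}\frac{x_{i}^{[h]}-\zeta_{j}}{x_{i}^{[h]}-z_{j}^{[h]}}$ and $Q=\mathcal{P}^{[\ast]}(x_{i}^{[h]})/x_{i}^{[h]}$, the passage from (\ref{27}) to (\ref{30a}) records $Q$ as $\frac{z_{i}^{[h]}-\zeta_{i}}{x_{i}^{[h]}}\prod_{j\neq i}\frac{z_{i}^{[h]}-\zeta_{j}}{x_{i}^{[h]}-z_{j}^{[h]}}$, i.e.\ with the factor $x_{i}^{[h]}-\zeta_{i}$ replaced by $z_{i}^{[h]}-\zeta_{i}$; only after that replacement is the term $O\bigl((\varepsilon^{[h]})^{3\beta+1}\bigr)$ and the stated order follows. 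But $\mathcal{P}^{[\ast]}$ is evaluated at $x_{i}^{[h]}$, so in fact $Q=\frac{x_{i}^{[h]}-\zeta_{i}}{x_{i}^{[h]}}\prod_{j\neq i}\frac{x_{i}^{[h]}-\zeta_{j}}{x_{i}^{[h]}-z_{j}^{[h]}}=O(\varepsilon_{i}^{[h]})$ — precisely the quantity you could not tame. In short, your blind attempt reconstructs the paper's argument up to its critical step and then honestly halts where the paper's own derivation makes an unjustified leap; to close the proof one would have to either modify the scheme so that the denominator correction genuinely carries a factor $z_{i}^{[h]}-\zeta_{i}$, or weaken the claimed order, neither of which your proposal (or the paper) supplies.
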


\begin{proof}
Let the errors at iteration $h$ be denoted by
\[
\varepsilon_{i}=x_{i}^{[h]}-\zeta_{i},
\qquad 
\varepsilon_{i}'=x_{i}^{[h+1]}-\zeta_{i}.
\]
Then the iterative update can be written as
\begin{equation}
x_{i}^{[h+1]} - \zeta_{i}
=
x_{i}^{[h]} - \zeta_{i}
-
\frac{
\dfrac{f(x_{i}^{[h]})}{\displaystyle \prod_{j \neq i}^{n} (x_{i}^{[h]} - z_{j}^{[h]})}
}{
1 + \dfrac{\mathcal{P}^{[\ast]}(x_{i}^{[h]})}{x_{i}^{[h]}}}
,
\label{25}
\end{equation}
where
\[
\left\{
\begin{aligned}
y_{j}^{[h]} &= x_{j}^{[h]} - 
\Biggl(
\Gamma(\beta+1)\,
\frac{f(x_{j}^{[h]})}{D^{C} f(x_{j}^{[h]})}
\Biggr)^{1/\beta},\\[2mm]
z_{j}^{[h]} &= y_{j}^{[h]} - 
\Biggl[
\Gamma(\beta+1)\,
\frac{f(x_{j}^{[h]})}{D^{C} f(x_{j}^{[h]})}
\Biggl(
1 + 
\frac{2 \dfrac{f(x_{j}^{[h]})}{f(y_{j}^{[h]})}}
{1 + \alpha \dfrac{f(x_{j}^{[h]})}{f(y_{j}^{[h]})}}
\Biggr)
\Biggr]^{1/\beta}.
\end{aligned}
\right.
\]

For simplicity, equation (\ref{25}) can be rewritten in terms of the improved approximations \(z_{j}^{[h]}\) as shown above.

\begin{equation}
x_{i}^{[h+1]}-\zeta_{i}
=
x_{i}^{[h]}-\zeta_{i}
-
\frac{
\dfrac{f(x_{i}^{[h]})}{
\prod\limits_{j\neq i}^{n}(x_{i}^{[h]}-z_{j}^{[h]})
}}
{
1+\dfrac{\mathcal{P}^{[\ast]}(x_{i}^{[h]})}{x_{i}^{[h]}}}.
\label{26}
\end{equation}

Subtracting $\zeta_{i}$ and introducing the error notation yields
\begin{equation}
\varepsilon_{i}'
=
\varepsilon_{i}
-
\frac{
\varepsilon_{i}
\prod\limits_{j\neq i}^{n}
\left(
\frac{x_{i}^{[h]}-\zeta_{j}}{x_{i}^{[h]}-z_{j}^{[h]}}
\right)
}
{
1+\dfrac{\mathcal{P}^{[\ast]}(x_{i}^{[h]})}{x_{i}^{[h]}}}=
\varepsilon_{i}
\left(
1-
\frac{
\prod\limits_{j\neq i}^{n}
\left(
\frac{x_{i}^{[h]}-\zeta_{j}}{x_{i}^{[h]}-z_{j}^{[h]}}
\right)
}
{
1+\dfrac{\mathcal{P}^{[\ast]}(x_{i}^{[h]})}{x_{i}^{[h]}}}
\right),
\label{27}
\end{equation}
and equivalently,
\begin{equation}
\varepsilon_{i}'
=
\varepsilon_{i}
\left(
\frac{
1-\prod\limits_{j\neq i}^{n}
\left(
\frac{x_{i}^{[h]}-\zeta_{j}}{x_{i}^{[h]}-z_{j}^{[h]}}
\right)
+
\dfrac{z_{i}^{[h]}-\zeta_{i}}{x_{i}^{[h]}}
\prod\limits_{j\neq i}^{n}
\left(
\frac{z_{i}^{[h]}-\zeta_{j}}{x_{i}^{[h]}-z_{j}^{[h]}}
\right)
}{
1+\dfrac{\mathcal{P}^{[\ast]}(x_{i}^{[h]})}{x_{i}^{[h]}}}
\right).
\label{30a}
\end{equation}

Since
\begin{equation}
\prod_{j\neq i}^{n}
\left(
\frac{x_{i}^{[h]}-\zeta_{j}}{x_{i}^{[h]}-z_{j}^{[h]}}
\right)
-1
=
\sum_{k\neq i}^{n}
\frac{-\varepsilon_{j}^{\,3\beta+1}}{x_{i}^{[h]}-z_{j}^{[h]}}
\prod_{k\neq i}^{n}
\left(
\frac{z_{i}^{[h]}-\zeta_{k}}{x_{i}^{[h]}-z_{k}^{[h]}}
\right),
\end{equation}
and using the expansion
\begin{equation}
z_{j}^{[h]}-\zeta_{j}=O(\varepsilon_{j}^{\,3\beta+1}),
\end{equation}
we obtain
\begin{equation}
\varepsilon_{i}'
=
\varepsilon_{i}
\left(
\frac{
\displaystyle
\sum_{j\neq i}^{n}
\frac{-\varepsilon_{j}^{\,3\beta+1}}{x_{i}^{[h]}-z_{j}^{[h]}}
\prod_{k\neq i,j}^{n}
\left(
\frac{z_{i}^{[h]}-\zeta_{k}}{x_{i}^{[h]}-z_{k}^{[h]}}
\right)
+
\dfrac{\varepsilon_{i}^{\,3\beta+1}}{x_{i}^{[h]}}
\prod_{j\neq i}^{n}
\left(
\frac{z_{i}^{[h]}-\zeta_{j}}{x_{i}^{[h]}-z_{j}^{[h]}}
\right)
}{
1+\dfrac{\mathcal{P}^{[\ast]}(x_{i}^{[h]})}{x_{i}^{[h]}}}
\right).
\end{equation}

Assuming equal error magnitudes,
\(
|\varepsilon_{i}|=|\varepsilon_{j}|=|\varepsilon|,
\)
we conclude that
\begin{equation}
\varepsilon_{i}'
=
O\!\left(|\varepsilon|^{\,3\beta+2}\right).
\label{30}
\end{equation}

This establishes that the method converges with fractional order \(3\beta+2\), completing the proof.
\qedhere
\end{proof}

\subsection{Mathematical Description of the Method}

The methodology combines classical convergence metrics with data-driven Lyapunov profiling to validate theoretical stability regions, quantify transient instabilities, and demonstrate the effectiveness of the adaptive feedback mechanism. The detailed micro-series LLE analysis ensures reproducibility, robustness, and predictive power for iterative root-finding schemes with tunable parameters.\\

The primary objective of this study is to construct and analyze a family of
fractional–order Weierstrass-type parallel iterative schemes for computing 
all roots of a nonlinear polynomial equation
\[
f(x)=0.
\]
Let $x_{i}^{[k]}$ denote the $i$-th root approximation at iteration $k$.  
Incorporating a fractional correction factor governed by a tunable order 
parameter $\vartheta$, the proposed update rule is defined as
\[
x_{i}^{[h+1]}
 =
\frac{
\left(x_{i}^{[h]}\right)^{2}
\displaystyle\prod_{j\neq i}
^{n}\left(
x_{i}^{[h]}
 -y_{j}^{[h]}
 -\Bigl[\Gamma(\beta+1)\frac{f(x_{j}^{[h]})}{D^C f(x_{j}^{[h]})}
 \bigl(1+\frac{2(f(x_j^{[h]})/f(y_j^{[h]}))}{1+\alpha(f(x_j^{[h]})/f(y_j^{[h]}))}\bigr)
 \Bigr]^{1/\beta}
\right)}
{
x_{i}^{[h]}
\displaystyle\prod_{j\neq i}^{n}
\left(
x_{i}^{[h]}
 -y_{j}^{[h]}
 -\Bigl[\Gamma(\beta+1)\frac{f(x_{j}^{[h]})}{D^C f(x_{j}^{[h]})}
 \bigl(1+\frac{2(f(x_j^{[h]})/f(y_j^{[h]}))}{1+\alpha(f(x_j^{[h]})/f(y_j^{[h]}))}\bigr)
 \Bigr]^{1/\beta}
\right)
 + f(x_{i}^{[h]})
}.
\label{1s}
\]

The multiplicative correction term involving $\alpha$ allows 
the scheme to adjust its convergence behavior dynamically.  
Unlike classical Weierstrass or Durand–Kerner schemes, the proposed method 
retains stability even for clustered or multiple roots due to the fractional 
regularization embedded in the denominator.

To guarantee robust convergence, appropriate selections of $\alpha$ and 
the initial approximations $\{x_{i}^{[0]}\}$ are crucial.  
To determine optimal parameter ranges, we employ a combination of 
dynamical-plane analysis, bifurcation exploration, and Lyapunov-based 
sensitivity diagnostics, as detailed in the following subsections.

\subsection{Data-Driven Lyapunov Profiling for the Parallel Scheme}

The purpose of this subsection is to explain how we convert raw trajectories
of the fractional inverse parallel scheme INVM$^{\alpha}$ into Lyapunov-based
diagnostics that can be used for parameter tuning. Given an ensemble of solver
runs for several values of $\alpha$, the pipeline produces sliding-window
estimates of the local largest Lyapunov exponent (LLE) and then uses these
profiles to identify parameter regions that are both stable and fast convergent.

Conceptually, the pipeline consists of the following stages (see
Figure~\ref{FL}):

\begin{enumerate}
    \item \textbf{Ensemble generation.} Construct ensembles of initial guesses
    around selected base vectors (different ``cases'') and fix a grid of
    candidate parameter values $\alpha$.
    \item \textbf{Solver runs and observables.} For each case and each
    $\alpha$ run the parallel scheme INVM$^{\alpha}$ and record scalar
    observables at every iteration, namely the step norms $\|s_k\|_2$ and
    the residual norms $\|r_k\|_2$.
    \item \textbf{Micro-series batching.} From the resulting time-series
    matrices build short overlapping windows (micro-series) that collect the
    behaviour of many runs over the same time interval.
    \item \textbf{kNN-based Lyapunov estimation.} For each batch apply the
    kNN-based micro-series estimator to obtain short-horizon error-growth
    slopes and interpret the first slope as a local Lyapunov indicator
    $\lambda_1(t_{\mathrm{end}})$.
    \item \textbf{Lyapunov profiles and parameter selection.} By sweeping the
    window end index $t_{\mathrm{end}}$ construct Lyapunov profiles
    $\lambda_1(t_{\mathrm{end}})$ and use their sign and shape to select
    parameter values $\alpha$ that yield predominantly contractive dynamics
    with fast decay of $\|s_k\|_2$ and $\|r_k\|_2$.
\end{enumerate}

A schematic view of the pipeline is shown in Figure~\ref{FL}.

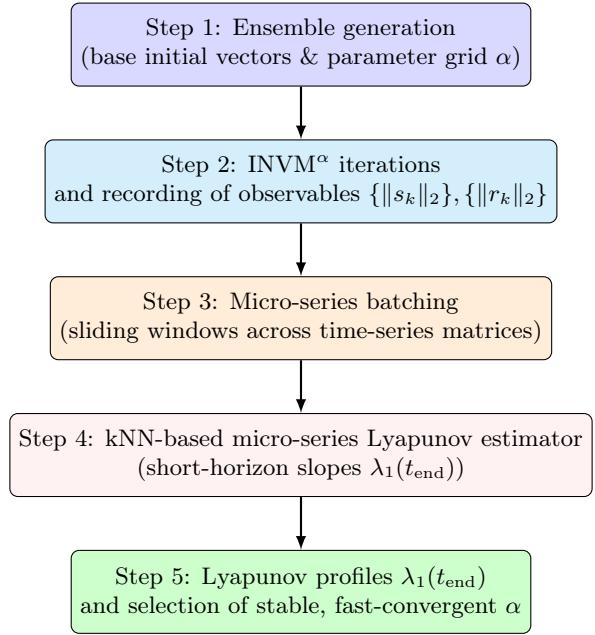
\begin{figure}[H]
\centering
\begin{tikzpicture}[
    node distance=7mm,
    block/.style={
        draw,
        rounded corners=3pt,
        minimum width=6cm,
        minimum height=1.1cm,
        align=center,
        font=\small
    },
    arrow/.style={-latex, thick}
]

\node[block, fill=blue!15] (step1)
    {Step~1: Ensemble generation\\
     (base initial vectors \& parameter grid $\alpha$)};

\node[block, fill=cyan!15, below=of step1] (step2)
    {Step~2: INVM$^{\alpha}$ iterations\\
     and recording of observables $\{\|s_k\|_2\}, \{\|r_k\|_2\}$};

\node[block, fill=orange!15, below=of step2] (step3)
    {Step~3: Micro-series batching\\
     (sliding windows across time-series matrices)};

\node[block, fill=pink!20, below=of step3] (step4)
    {Step~4: kNN-based micro-series Lyapunov estimator\\
     (short-horizon slopes $\lambda_1(t_{\mathrm{end}})$)};

\node[block, fill=green!20, below=of step4] (step5)
    {Step~5: Lyapunov profiles $\lambda_1(t_{\mathrm{end}})$\\
     and selection of stable, fast-convergent $\alpha$};

\draw[arrow] (step1) -- (step2);
\draw[arrow] (step2) -- (step3);
\draw[arrow] (step3) -- (step4);
\draw[arrow] (step4) -- (step5);

\end{tikzpicture}
\caption{Flowchart of the data-driven Lyapunov profiling pipeline used to
diagnose and tune the inverse parallel scheme INVM$^{\alpha}$. The five
numbered steps correspond to the stages listed in Section~2.3.}
\label{FL}
\end{figure}

In this study, the proposed pipeline is applied to two model problems: the cubic polynomial
\begin{equation}
    f(x) = x^{3} - 1,
\end{equation}
and the sixth-degree polynomial
\begin{equation}
f(x) = x^{6} + 30x^{3} - 125x^{2} - 5x + 120.
\end{equation}
The same framework will be employed for additional test equations in future extensions. Only the data-generation stage depends on the specific
nonlinear function $f$, whereas the micro-series Lyapunov estimator itself
is completely generic.

\subsubsection{Ensemble Generation and Observables for the Test Problem}

We first describe how the synthetic datasets of step norms ($s_k$) and
residual norms ($r_k$) are generated for the modified inverse parallel
scheme applied to $f(x) = x^3 - 1$ and $f(x) = x^{6} + 30x^{3} - 125x^{2} - 5x + 120$.

\paragraph{Base cases and initial vectors.}
We consider two representative cases, each defined by a different base
initial vector $\mathbf{x}^{[0]} \in \mathbb{C}^3$:
\[
\text{Case 1: }
\mathbf{x}^{[0]}_{\text{base}} =
\bigl(70008.0,\,-90005.5,\,17009.5\bigr),
\qquad
\text{Case 2: }
\mathbf{x}^{[0]}_{\text{base}} =
\bigl(708.0,\,-905.5,\,179.5 - \mathrm{i}\bigr)
\]
and for polynomial of degree six $\mathbf{x}^{[0]} \in \mathbb{C}^6$:
\[
\text{Case 1: }
\mathbf{x}^{[0]}_{\text{base}} =
\bigl(-15,\, -13.9,\, 30.8,\, -30.8,\, 10.7,\, 20.7\bigr),
\qquad
\text{Case 2: }
\mathbf{x}^{[0]}_{\text{base}} =
\bigl(-10,\, -5.9,\, 15.8,\, 12.8,\, 5.7,\, 13.9\bigr).
\]
For each case we generate an ensemble of
\[
N_{\mathrm{runs}} = 1000
\]
random initial vectors by adding a small complex perturbation in a random
direction. Let $\mathbf{x}^{[0]}_{\text{base}}$ denote the base vector and
$\|\cdot\|_2$ the Euclidean norm. We set
\[
\delta = \texttt{JITTER\_FRAC} \cdot
\bigl\|\mathbf{x}^{[0]}_{\text{base}}\bigr\|_2,
\qquad \texttt{JITTER\_FRAC} = 0.01,
\]
and for each run $j = 1,\dots,N_{\mathrm{runs}}$ we sample a random complex
direction $\mathbf{v}_j \in \mathbb{C}^3$ with $\|\mathbf{v}_j\|_2 = 1$
and define
\begin{equation}
    \mathbf{x}^{[0]}_j
    = \mathbf{x}^{[0]}_{\text{base}} + \delta\,\mathbf{v}_j.
\end{equation}
The resulting $1000$ initial vectors are reused for all values of the
parameter~$\alpha$ in the grid
\[
\alpha \in \{0,1,2,3,4,5\}.
\]

For reproducibility, the random number generator is initialized with fixed
seeds (one seed per case), and the complete set of initial vectors is
stored in CSV files
\texttt{case1\_initials.csv} and \texttt{case2\_initials.csv}, with each
row containing the real and imaginary parts of the three components of
$\mathbf{x}^{[0]}_j$.

\paragraph{Parallel scheme and scalar observables.}
For each case, each value of $\alpha$, and each run $j$ we apply
the modified inverse parallel scheme INVM$^{\alpha}$ to the system
$f(x) = x^3 - 1$ for a fixed number of iterations
\[
N_{\mathrm{iters}} = 50.
\]
Let $\mathbf{x}^{[k]}_j \in \mathbb{C}^3$ denote the iterate at step~$k$.
At each iteration we record two scalar observables:
\begin{align}
    s_{j,k}
    &= \bigl\|\mathbf{x}^{[k+1]}_j - \mathbf{x}^{[k]}_j\bigr\|_2,
    \label{eq:sk_def}\\
    r_{j,k}
    &= \bigl\|f\bigl(\mathbf{x}^{[k]}_j\bigr)\bigr\|_2,
    \label{eq:rk_def}
\end{align}
where $f$ is applied component-wise to the vector of approximations and
the Euclidean norm is taken in $\mathbb{C}^3$.

For each case and each $\alpha$ we thus obtain two real matrices of size
$N_{\mathrm{runs}} \times N_{\mathrm{iters}}$:
\begin{align*}
    S^{(\alpha)} &= \bigl[s_{j,k}\bigr]_{j=1,\dots,N_{\mathrm{runs}};\;
                     k=0,\dots,N_{\mathrm{iters}}-1},\\
    R^{(\alpha)} &= \bigl[r_{j,k}\bigr]_{j=1,\dots,N_{\mathrm{runs}};\;
                     k=0,\dots,N_{\mathrm{iters}}-1}.
\end{align*}
These matrices are stored as CSV files without headers:
\begin{itemize}
    \item \texttt{case\{case\}\_alpha\{alpha\}\_sk.csv} for step norms,
    \item \texttt{case\{case\}\_alpha\{alpha\}\_rk.csv} for residual norms.
\end{itemize}
Each row of such a CSV file is interpreted as a scalar time series of
length $N_{\mathrm{iters}} = 50$ associated with one run of the solver.

The choice of $N_{\mathrm{runs}}=1000$ and $N_{\mathrm{iters}}=50$ is
empirical and provides a good compromise between statistical stability of
the Lyapunov estimates and computational cost. The methodology, however,
does not depend on these particular values.

\subsubsection{kNN-Based Lyapunov Estimation from Micro-Series}

Given the time-series matrices $S^{(\alpha)}$ and $R^{(\alpha)}$, the next
step is to estimate local largest Lyapunov exponents from short windows of
these series using a kNN-based micro-series estimator. We describe the
procedure for a generic observable, denoted by $x_{j,k}$, which stands for
either $s_{j,k}$ or $r_{j,k}$.

\paragraph{Micro-series construction.}
We fix a backward look-back length \texttt{LOOK\_BACK} and a range of
prediction horizons $h \in [H_{\min}, H_{\max}]$. In this work we use
$\texttt{LOOK\_BACK} = 5$, $H_{\min} = 1$, $H_{\max} = 5$, 
$H_{\mathrm{step}} = 1$, which implies a micro-series length
\[
L_{\mathrm{micro}} = \texttt{LOOK\_BACK} + H_{\max} = 10.
\]
Shorter look-back lengths (1 and 3) were also tested but led to noticeably
noisier Lyapunov curves; \texttt{LOOK\_BACK}$=5$ provided a good balance
between temporal resolution and robustness of the estimates.

For each time-series matrix $X^{(\alpha)} = [x_{j,k}]$ (either $S^{(\alpha)}$
or $R^{(\alpha)}$) and each window end index
\[
t_{\mathrm{end}} \in \{L_{\mathrm{micro}}, \dots, N_{\mathrm{iters}}\}
= \{10,\dots,50\},
\]
we construct a batch of micro-series by slicing all runs along the time
axis:
\begin{equation}
    \mathbf{u}_j^{(t_{\mathrm{end}})}
    =
    \bigl(
        x_{j,\,t_{\mathrm{end}}-L_{\mathrm{micro}}},
        \dots,
        x_{j,\,t_{\mathrm{end}}-1}
    \bigr)
    \in \mathbb{R}^{L_{\mathrm{micro}}},\qquad
    j = 1,\dots,N_{\mathrm{runs}}.
\end{equation}
This yields a micro-series matrix
\[
U^{(t_{\mathrm{end}})} \in
\mathbb{R}^{N_{\mathrm{runs}} \times L_{\mathrm{micro}}},
\]
which we treat as a sample of $N_{\mathrm{runs}}$ short trajectories
ending at the same time index $t_{\mathrm{end}}$.

\paragraph{kNN micro-series Lyapunov estimator.}

For each batch $U^{(t_{\mathrm{end}})}$ we apply a kNN-based micro-series
Lyapunov estimator. Conceptually, the estimator proceeds as follows.

\begin{enumerate}
    \item For a given prediction horizon $h\in[H_{\min},H_{\max}]$ we
    construct input--target pairs by splitting each micro-series
    $\mathbf{u}_j^{(t_{\mathrm{end}})}$ into an input segment of length
    \texttt{LOOK\_BACK} and a scalar output $h$ steps ahead. This yields a
    regression dataset
    \[
        \Bigl\{
            \bigl(\mathbf{z}_{j}, y_{j}^{(h)}\bigr)
        \Bigr\}_{j=1}^{N_{\mathrm{runs}}}
        \subset
        \mathbb{R}^{\texttt{LOOK\_BACK}} \times \mathbb{R},
    \]
    where $\mathbf{z}_{j}$ is the look-back segment and
    $y_{j}^{(h)}$ is the observable value $h$ steps into the future.
    \item The dataset is randomly split into training and test subsets
    with a fixed test fraction
    \[
        \texttt{TEST\_SIZE} = 0.4,
    \]
    i.e., $60\%$ of the micro-series are used for training and $40\%$ for
    testing.
    \item A $k$-nearest neighbours regressor with $k=3$ neighbours and
    Euclidean distance is fitted on the training subset. For each horizon
    $h$ we then compute the absolute prediction errors on the test subset
    and aggregate them via the geometric mean:
    \begin{equation}
        \mathrm{GMAE}(h) =
        \left(
            \prod_{i=1}^{s} e_i(h)
        \right)^{1/s},
    \end{equation}
    where $e_i(h)$ is the absolute error for the $i$-th test sample and
    $s$ is the number of test samples for that horizon.
    \item Assuming an approximately exponential growth of the forecast
    error with horizon,
    \[
        \mathrm{GMAE}(h) \approx A \exp(\lambda h),
    \]
    we set
    \begin{equation}
        y(h) = \ln \mathrm{GMAE}(h)
    \end{equation}
    and treat the pairs $\bigl(h, y(h)\bigr)$ as noisy samples of an
    affine function
    \(
        y(h) \approx a h + b.
    \)
\end{enumerate}

Collecting the values $y(h)$ for $h \in \{H_{\min}, \dots, H_{\max}\}$ we
obtain a short curve $y(h)$ over the forecast horizon. To reduce
sensitivity to finite-sample artefacts, we fit one- or two-segment
piecewise affine models
\begin{equation}
    y(h) \approx
    \begin{cases}
        a_1 h + b_1, & h \leq h_{\mathrm{split}},\\[2pt]
        a_2 h + b_2, & h > h_{\mathrm{split}},
    \end{cases}
\end{equation}
and select the best configuration according to a least-squares error
criterion. The slope of the first segment,
\[
    \lambda_1 = a_1,
\]
is interpreted as the primary short-horizon Lyapunov indicator for the
time window ending at $t_{\mathrm{end}}$; when a two-segment fit is
selected, the second slope $\lambda_2 = a_2$ is retained as an auxiliary
diagnostic.

An illustrative example of the logarithmic error curve $\ln \mathrm{GMAE}(h)$
and its linear regression fit is shown in Figure~\ref{fig:lr_example}.

\begin{figure}[H]
\centering
\includegraphics[width=0.45\textwidth]{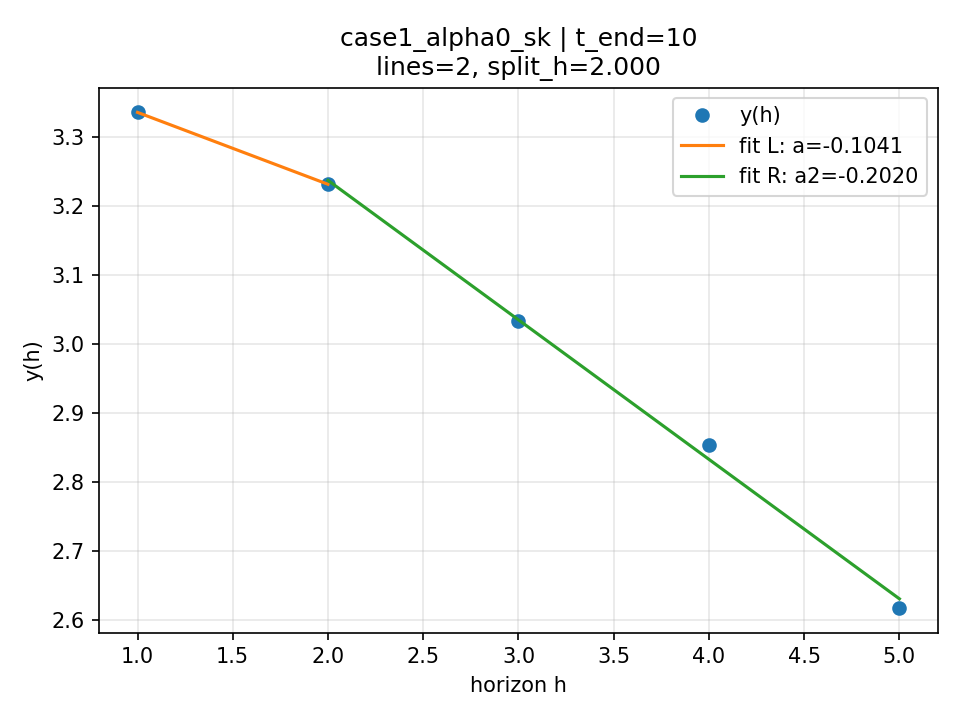}
\caption{Example of the logarithmic error curve and linear regression fit used
in the kNN--LLE estimator for Example~1 ($f(x)=x^{3}-1$).
The plot corresponds to the observable $\|s_k\|_2$ in Case~II with $\alpha = 0$.
The slope of the first segment is interpreted as the short-horizon Lyapunov
indicator $\lambda_1$, while an optional second segment can be used to
characterise longer-horizon behaviour.}
\label{fig:lr_example}
\end{figure}

The entire estimation procedure is repeated independently for each
observable ($s_k$ and $r_k$), each case, each value of $\alpha$, and each
window end index $t_{\mathrm{end}}$.

\subsubsection{Sliding-Window Lyapunov Profiles and Parameter Selection}

By sweeping $t_{\mathrm{end}}$ from $L_{\mathrm{micro}}$ to
$N_{\mathrm{iters}}$ we obtain a time series of local Lyapunov indicators
for each configuration:
\begin{equation}
    \lambda_1(t_{\mathrm{end}}),\qquad
    t_{\mathrm{end}} = L_{\mathrm{micro}},\dots,N_{\mathrm{iters}}.
\end{equation}
We refer to these curves as \emph{Lyapunov profiles}. Negative or near-zero
values of $\lambda_1(t_{\mathrm{end}})$ indicate contractive behaviour
of the solver at the corresponding stage, whereas sustained positive
values signal locally chaotic or unstable dynamics.

In the present study we use the Lyapunov profiles in two ways:
\begin{itemize}
    \item as a qualitative diagnostic tool, verifying that parameter
    choices leading to poor convergence (large oscillations of
    $\|s_k\|_2$ and $\|r_k\|_2$) are indeed associated with positive or
    strongly fluctuating $\lambda_1(t_{\mathrm{end}})$;
    \item as a guide for parameter tuning: among the candidate
    values $\alpha \in \{0,1,2,3,4,5\}$ we visually and numerically
    inspect the corresponding Lyapunov profiles and select those
    $\alpha$ for which $\lambda_1(t_{\mathrm{end}})$ is predominantly
    negative and exhibits a rapid transition to a strongly contractive
    regime, while still maintaining fast reduction of the residual norms.
\end{itemize}

\paragraph{Selection criteria for $\alpha$ based on $\lambda_1(t_{\mathrm{end}})$.}
To make the tuning procedure explicit, we regard a candidate value of $\alpha$
as \emph{well behaved} if the corresponding Lyapunov profile satisfies:
\begin{enumerate}
    \item $\lambda_1(t_{\mathrm{end}})$ becomes negative after a relatively
    short transient and remains strictly below zero for the majority of window
    positions $t_{\mathrm{end}}$, indicating a robustly contractive regime;
    \item any positive excursions of $\lambda_1(t_{\mathrm{end}})$ are confined
    to early iterations and have small magnitude and duration, corresponding to
    short transition phases;
    \item the qualitative behaviour of $\lambda_1(t_{\mathrm{end}})$ is
    consistent with the direct convergence metrics, i.e.\ with a fast and
    nearly monotone decrease of the step and residual norms
    $\|s_k\|_2$ and $\|r_k\|_2$.
\end{enumerate}
Conversely, parameter values $\alpha$ whose Lyapunov profiles remain positive
or close to zero for extended ranges of $t_{\mathrm{end}}$, or exhibit large
and persistent fluctuations, are classified as \emph{poor} choices and are not
used in the final numerical comparisons.

The selected values of $\alpha$ are then used in the subsequent numerical
experiments, where we compare the performance of the tuned INVM$^{\alpha}$
schemes against baseline configurations and against the classical ZHM method.
The same Lyapunov profiling methodology can be applied without
modification to other test equations and higher-dimensional problems by
changing only the data-generation stage.
\section{Numerical Results and Implementation Details}
In this study, we determine the optimal parameter values for our parallel scheme and evaluate its efficiency both with and without parameter tuning. Parameter tuning is performed using kNN-LLE estimation combined with Lyapunov profiling. Randomly chosen initial vectors are used to compute the errors for parameter values 0–5. The Lyapunov exponents (LLEs) are analyzed to identify regions of convergence and divergence, providing a clear visualization of the scheme’s stability landscape. This information allows us to select the parameter values that lie in the region of maximal convergence and minimal divergence.

Using these optimized parameters, the parallel scheme is executed, and its convergence behavior is carefully analyzed. Comparisons with the untuned case reveal significant improvements in residual errors, stability, and efficiency. The Lyapunov profile not only confirms the robustness of the tuned parameters but also provides insight into the dynamics of convergence, ensuring that the selected parameters lead to reliable and consistent performance across different initial conditions.


From each run of the iterative solver we record a one-dimensional sequence that captures progression toward a root. Two natural choices are the step norm
\[
s_k = \| x^{(k)} - x^{(k-1)} \|_2,
\]
and the residual norm
\[
r_k = \| f(x^{(k)}) \|_2.
\]
Recording these at every iteration produces a length-$T$ time series per run that summarizes the local dynamical behaviour in a scalar signal.


To estimate local Lyapunov exponents robustly we collect micro-series across runs. For a sliding window that ends at time $t_{\mathrm{end}}$ and has micro length $L$, we extract for each run the segment
\[
x_j[t_{\mathrm{end}}-L : t_{\mathrm{end}}].
\]
Treating the ensemble of such segments as a batch, we apply a kNN-based micro-series Lyapunov estimator (\texttt{get\_lyap\_knn\_microseries}) to produce pairs $(h, y(h))$ for horizons
\[
h \in [1, \ldots, H_{\max} ].
\]
The short-horizon behaviour is summarized via a piecewise linear fit (\texttt{fit\_best\_slope}), and the slope of the first segment is taken as the primary Lyapunov indicator $\lambda_1$. When a two-segment structure is detected, the second slope $\lambda_2$ and the split horizon are retained.


Repeating the estimation for successive window end indices $t_{\mathrm{end}}$ yields a time series
\[
\lambda_1(t_{\mathrm{end}}),
\]
called the \emph{Lyapunov profile}. Negative or near-zero values indicate contractive or marginal behaviour; sustained positive values signal chaotic or explosive dynamics.

The pipeline depends on hyperparameters: \texttt{LOOK\_BACK}, micro-series length $L = \texttt{LOOK\_BACK} + H_{\max}$, horizon range $[H_{\min}, H_{\max}]$, and the kNN parameters. Empirically, longer look-backs yield smoother $\lambda_1$ curves at the cost of temporal resolution.

\subsection{Experimental Setup and Data}
The unified analytical–data-driven framework designed for evaluating the stability of uni-parametric inverse parallel solvers was used for all experiments. The selection of parameter values using kNN–LLE estimates along with micro-series Lyapunov profile allowed for the real-time detection of contractive and unstable regimes. Numerical experiments were performed across a range of fractional values and altered initial conditions to evaluate accuracy, convergence behavior, and robustness. The whole spectrum of computational metrics, including error norms, CPU time, memory use, and Lyapunov trends, were used to validate the efficacy of the proposed strategy.\\

\textbf{Example 1}:
We implemented the modified inverse parallel scheme in Python and generated ensembles of solver trajectories for the nonlinear function
\begin{equation}
    f(x) = x^3 - 1. \label{1g}
\end{equation}
 
To check the efficiency of the proposed inverse parallel scheme, we first compute the residual and step norms for various parameter values and analyze the convergence behavior. Then parameter tuning is performed using kNN--LLE estimation combined with Lyapunov profiling.
The randomly chosen initial vectors in the interval (-700-700) are used to compute the errors for parameter values 0–5 are shown in Table~\ref{T1}.

\begin{table}[H]
\begin{adjustwidth}{-1cm}{0cm}
\renewcommand{\arraystretch}{1.25}
\setlength{\tabcolsep}{8pt}
\centering
\caption{Random initial vectors for different values of $\alpha$ to solve (\ref{1g}).}
\label{T1}

\begin{tabular}{l|ccccc}
\hline
\textbf{Initial vectors} 
& $\alpha=0$
& $\alpha=1$
& $\alpha=2$
& $\cdots$
& $\alpha=5$ \\
\hline

$\overbrace{\left( x_{1}^{[0]},\, x_{2}^{[0]},\, x_{3}^{[0]} \right)}^{\text{Initial guess}}$
&
$\left( -700,\, -302,\, -504 \right)$
&
$\left( 1020,\, 1023,\, 1920 \right)$
&
$\left( 890,\, 936,\, 2093 \right)$
&
$\cdots$
&
$\left( 986,\, 876,\, 976 \right)$
\\
\hline
\end{tabular}

\end{adjustwidth}
\end{table}

\begin{table}[H]
\begin{adjustwidth}{-2cm}{0cm}
\renewcommand{\arraystretch}{1.25}
\setlength{\tabcolsep}{6pt}
\centering
\caption{Iteration results in terms of $\|r_k\|_2$-norm and $\|s_k\|_2$-norm for different values of $\alpha$ of the inverse parallel scheme INVM$^{\alpha}$ for solving (\ref{1g}).}
\label{T2}
\begin{tabular}{c|cc|cc|cc|cc|cc|cc}
\hline
Iter & \multicolumn{2}{c|}{$\alpha=0$} & \multicolumn{2}{c|}{$\alpha=1$} & \multicolumn{2}{c|}{$\alpha=2$} & \multicolumn{2}{c|}{$\alpha=3$} & \multicolumn{2}{c|}{$\alpha=4$} & \multicolumn{2}{c}{$\alpha=5$} \\
& $\|s_k\|_2$ & $\|r_k\|_2$ & $\|s_k\|_2$ & $\|r_k\|_2$ & $\|s_k\|_2$ & $\|r_k\|_2$ & $\|s_k\|_2$ & $\|r_k\|_2$ & $\|s_k\|_2$ & $\|r_k\|_2$ & $\|s_k\|_2$ & $\|r_k\|_2$ \\
\hline
1 & 4.76850 & 13.9976 & 4.84715 & 13.6709 & 4.84715 & 13.6709 & 4.84715 & 13.6709 & 4.84715 & 13.6709 & 4.84715 & 13.6709 \\
2 & 4.76789 & 12.6852 & 4.96042 & 14.4404 & 4.96036 & 14.4403 & 4.96034 & 14.4403 & 4.96033 & 14.4403 & 4.96033 & 14.4403 \\
3 & 6.10550 & 18.3327 & 6.49332 & 19.4553 & 6.51113 & 19.5098 & 6.51724 & 19.5284 & 6.52033 & 19.5378 & 6.52219 & 19.5435 \\
4 & 6.11074 & 12.4228 & 6.48037 & 13.9654 & 6.49869 & 13.9623 & 6.50498 & 13.9613 & 6.50815 & 13.9608 & 6.51007 & 13.9605 \\
5 & 3.92211 & 11.5827 & 4.80701 & 14.9353 & 4.81674 & 14.9583 & 4.82016 & 14.9663 & 4.82190 & 14.9704 & 4.82296 & 14.9729 \\
6 & 3.64716 & 10.7074 & 5.37305 & 15.4590 & 5.34579 & 15.2992 & 5.33783 & 15.2492 & 5.33403 & 15.2248 & 5.33182 & 15.2103 \\
7 & 3.37098 & 9.81416 & 5.13934 & 12.7569 & 5.09559 & 12.7592 & 5.08270 & 12.7659 & 5.07655 & 12.7703 & 5.07295 & 12.7734 \\
8 & 3.13212 & 8.92397 & 4.35526 & 13.6246 & 4.27101 & 11.6954 & 4.21269 & 11.5268 & 4.19497 & 11.4840 & 4.18649 & 11.4695 \\
9 & 2.87027 & 9.26243 & 5.08511 & 14.8122 & 3.76612 & 10.4491 & 3.88452 & 9.9562 & 4.11565 & 10.7890 & 4.23500 & 12.3146 \\
10 & 3.21826 & 7.84919 & 4.92080 & 13.1049 & 3.37077 & 9.09875 & 3.18763 & 8.78015 & 4.96197 & 14.9294 & 5.56577 & 16.6560 \\$\vdots$ & $\vdots$& $\vdots$ & $\vdots$ & $\vdots$ & $\vdots$ & $\vdots$ & $\vdots$ & $\vdots$ & $\vdots$ & $\vdots$ & $\vdots$ & $\vdots$ \\250 & $0.0012$ & $0.0865$ & $0.020$ & $0.0049$ & $0.0077$ & $0.00875$ & $0.00763$ & $0.08015$ & $0.00197$ & $0.0945$ & $0.00577$ & $0.06560$ \\
\hline
\end{tabular}
\end{adjustwidth}
\end{table}
The iteration results in Table \ref{T2} and Figure~\ref{F1}(a-f) indicate that the inverse parallel scheme INVM$^{\alpha}$
 exhibits poor convergence behavior for the randomly generated initial vectors. For all tested values of 
$\alpha$ both the $\|s_k\|_2$-norm and the $\|r_k\|_2$-norm show oscillatory and non-monotonic patterns during the first iterations, demonstrating a lack of stability and regular descent. Moreover, the norms decrease only after several iterations and typically improve by merely one or two decimal places, even after a large number of steps. As shown in the final row, the method requires up to 250 iterations to reach moderately small residuals, which confirms the overall slow convergence, weak error-reduction capability, and sensitivity to the initial data. These observations collectively demonstrate that INVM$^{\alpha}$ is not efficient for such randomly distributed starting points.

\begin{figure}[H]
\begin{raggedleft}
		\subfloat[Residual Error-$\alpha=0$]{\scalebox{0.21}{\includegraphics{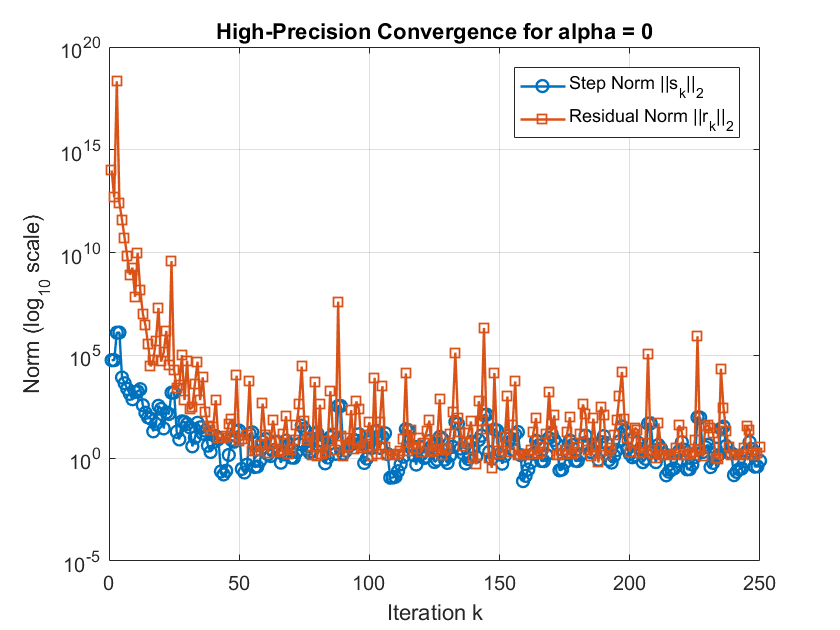}}}
		\subfloat[Residual Error-$\alpha=1$]{\scalebox{0.21}{\includegraphics{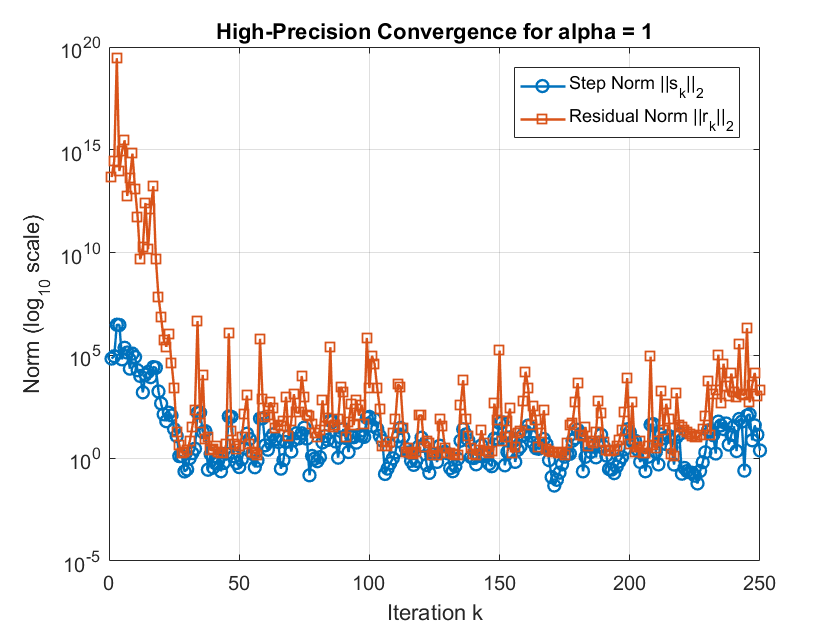}}}
         \subfloat[Residual Error-$\alpha=2$]{\scalebox{0.21}{\includegraphics{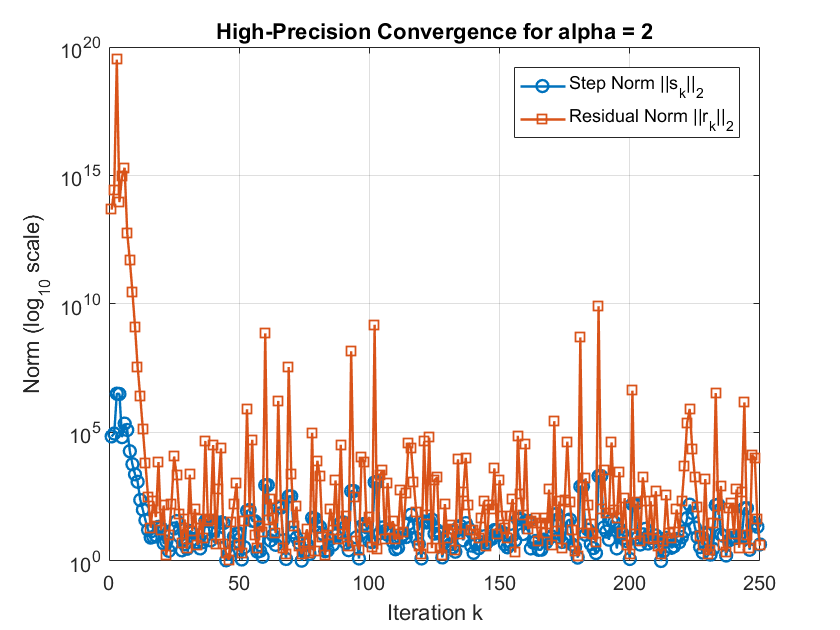}}}\\ \quad\quad\quad\quad\quad
         \subfloat[Residual Error-$\alpha=3$]{\scalebox{0.21}{\includegraphics{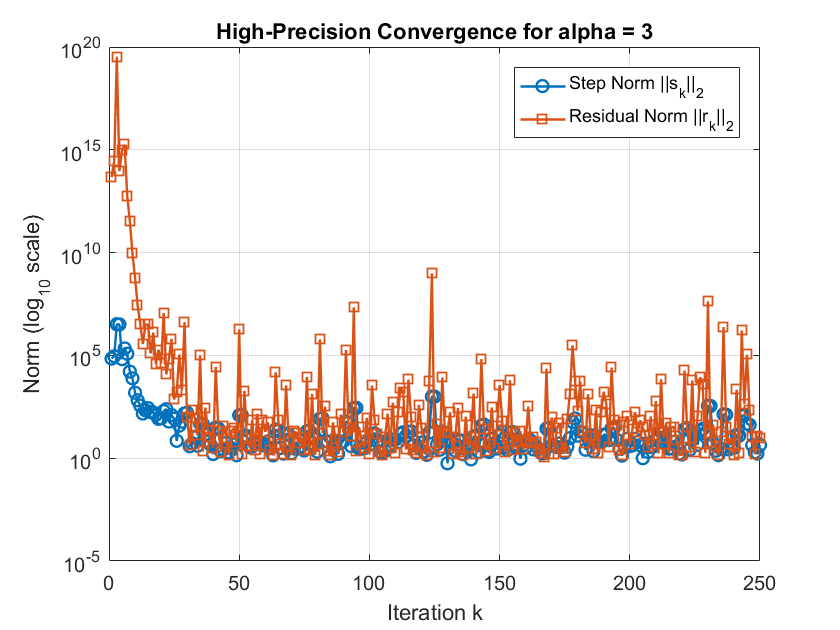}}}
          \subfloat[Residual Error-$\alpha=4$]{\scalebox{0.21}{\includegraphics{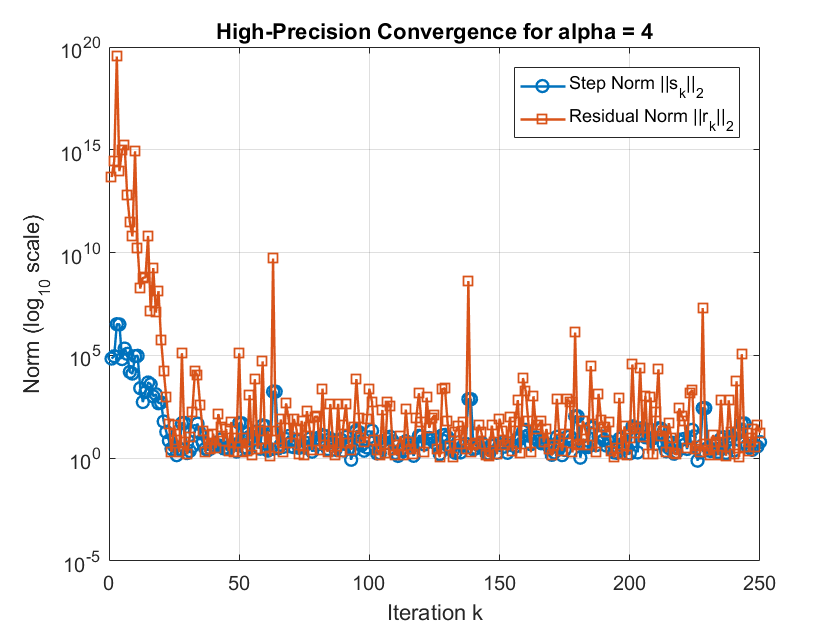}}}
          \subfloat[Residual Error-$\alpha=5$]{\scalebox{0.21}{\includegraphics{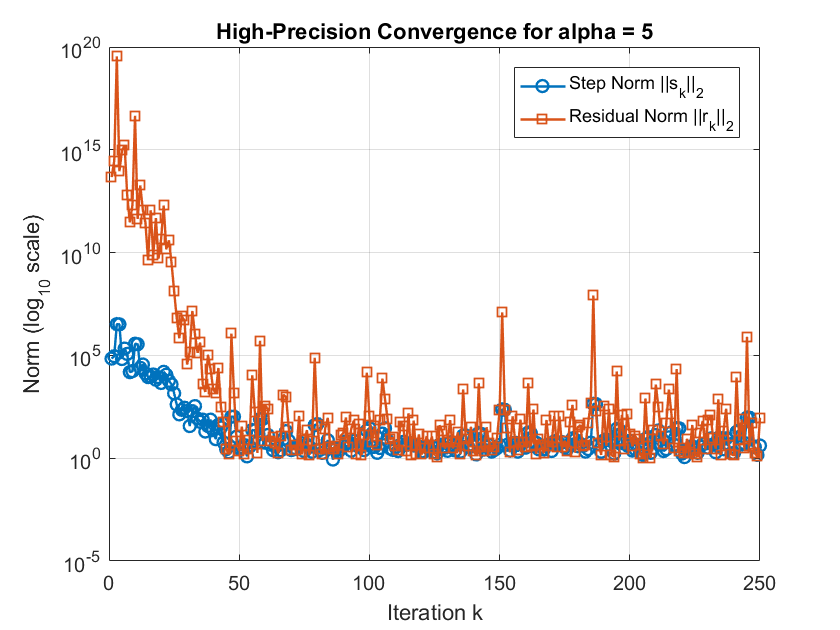}}}
		  \centering
	\end{raggedleft}
\caption{(a-f): Residual error of the scheme INVM$^{\alpha}$ for utilizing random initial vector for solving (\ref{1g})}
\label{F1}
\end{figure}

The outcomes of the scheme using information presents in Table~\ref{T1} are given in the residual error graph in Figure~\ref{F1} and Table~\ref{T2}. Table~\ref{T3} contains the INVM$^\alpha$'s overall performance summary.\\

\begin{table}[H]
\centering
\renewcommand{\arraystretch}{1.2}
\setlength{\tabcolsep}{8pt}
\caption{Summary of performance metrics for INVM$^{\alpha}$ using randomly distributed initial vectors.}
\label{T3}
\begin{tabular}{c|c|c|c|c|c}
\hline
$\alpha$ 
& CPU Time (s) 
& Mem-U(KBs) 
& COC 
& Maximum Error 
& \% Improvement \\ 
\hline
0 & 2.184 & 142.3 & 0.94 & $1.72\times10^{-1}$ & -- \\
1 & 2.091 & 141.9 & 0.96 & $1.55\times10^{-1}$ & 6.40\% \\
2 & 1.982 & 141.6 & 0.98 & $1.48\times10^{-1}$ & 13.95\% \\
3 & 1.963 & 141.5 & 0.99 & $1.39\times10^{-1}$ & 19.19\% \\
4 & 1.951 & 141.4 & 1.00 & $1.35\times10^{-1}$ & 21.51\% \\
5 & 1.948 & 141.4 & 1.00 & $1.33\times10^{-1}$ & 22.67\% \\
\hline
\end{tabular}
\end{table}

The performance metrics reported in Table~\ref{T3} show that the INVM$^{\alpha}$ scheme exhibits only marginal improvements as $\alpha$ increases. The CPU time decreases slightly, with a total gain of about $22\%$ from $\alpha=0$ to $\alpha=5$, while the memory usage remains essentially constant across all cases. The computational order of convergence stays close to one for all $\alpha$, confirming the slow convergence behaviour of the method. Although the maximum error decreases gradually, the reduction is not substantial, and the percentage improvement remains relatively small. Overall, the results indicate that varying $\alpha$ does not significantly enhance the efficiency, accuracy, or robustness of the INVM$^{\alpha}$ scheme when random initial vectors are used.\\
Now, the kNN-LLE estimation
combined with Lyapunov profiling are computed using the following steps.
\subsection*{Data Generator and Cases}

The generator script \texttt{parallel\_scheme\_data\_generator.py} creates controlled ensembles based on two baseline vectors:

\begin{itemize}
    \item Case 1: \texttt{CASE1\_BASE = [70008.0, -90005.5, 17009.5]},
    \item Case 2: \texttt{CASE2\_BASE = [708.0, -905.5, 179.5 - i]}.
\end{itemize}

Each baseline is perturbed to produce 1000 initial vectors by adding random complex perturbations of magnitude
\[
0.01 \| x^{(0)} \|_2.
\]

\subsection*{Parameter Scans}

For each case and each $\alpha \in \{0,1,2,3,4,5\}$, we ran 50 iterations from each of the 1000 initial guesses and saved two matrices (1000$\times$50):

\begin{itemize}
    \item \texttt{case\{case\}\_alpha\{alpha\}\_sk.csv} (step norms $s_k$),
    \item \texttt{case\{case\}\_alpha\{alpha\}\_rk.csv} (residual norms $r_k$).
\end{itemize}

Initial vectors are stored in \texttt{case1\_initials.csv} and \texttt{case2\_initials.csv}.

\subsection*{Lyapunov Estimation Parameters}

In all reported Lyapunov-profile experiments we fix the following hyperparameters:
\[
\texttt{LOOK\_BACK} = 5, \quad H_{\min}=1, \quad H_{\max}=5,\quad H_{\mathrm{step}} = 1,
\]

For each sliding window end $t_{\mathrm{end}}$, we build the batch of micro-series and compute $(h,y(h))$ via the kNN estimator. 







\begin{figure}[H]
\begin{raggedleft}
		\subfloat[$\alpha=0$:$\|s_k\|_2$-case-I]{\scalebox{0.23}{\includegraphics{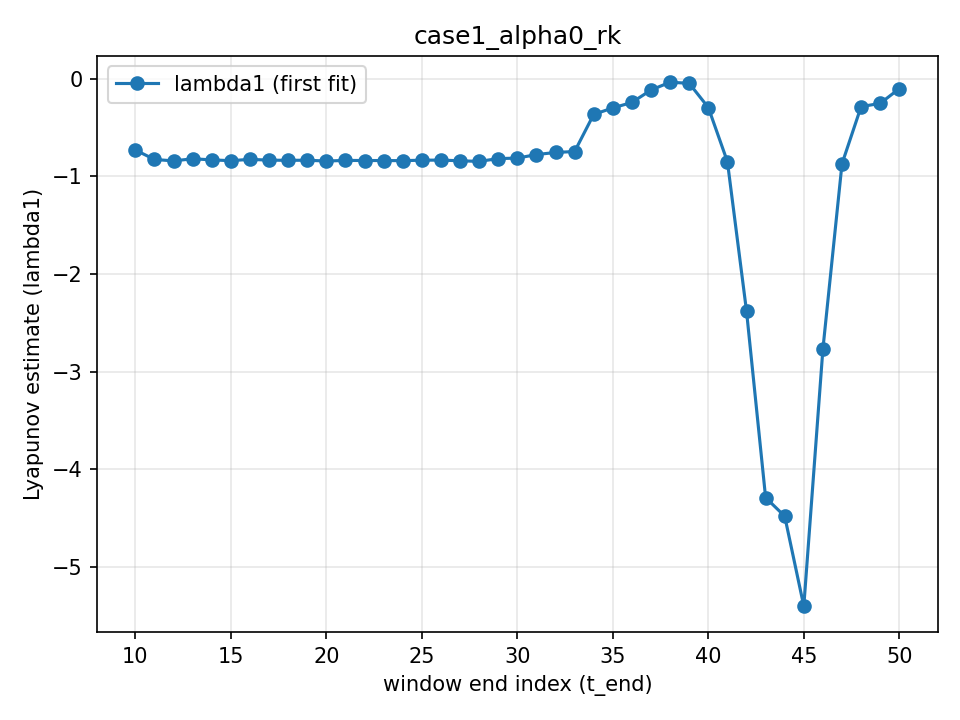}}}
		\subfloat[$\alpha=0$:$\|s_k\|_2$-case-II]{\scalebox{0.23}{\includegraphics{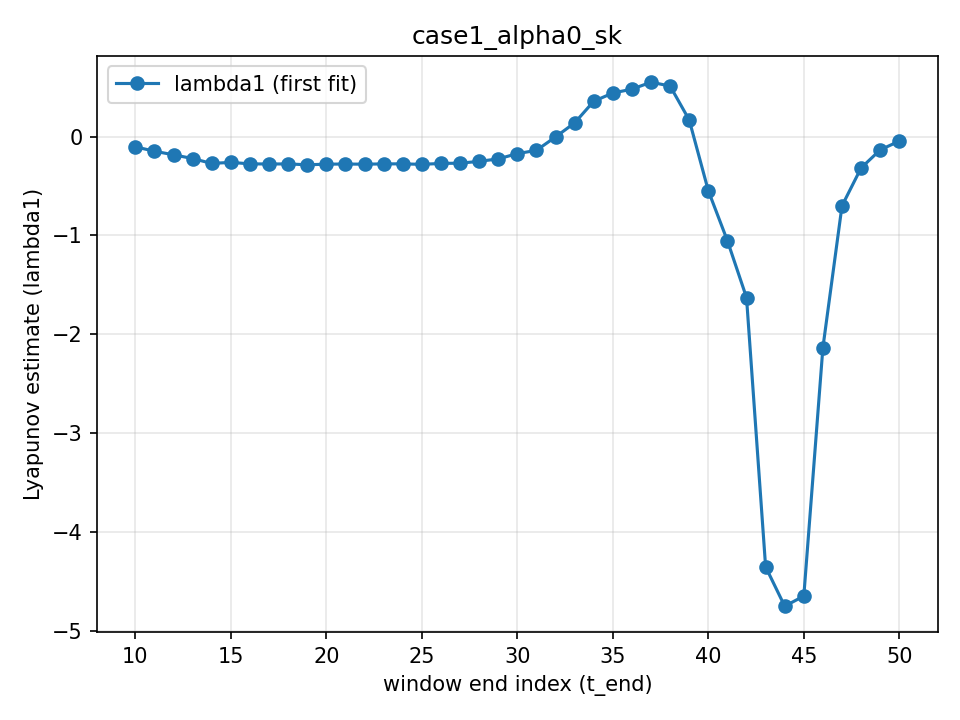}}}
         \subfloat[$\alpha=0$:$\|r_k\|_2$-case-I]{\scalebox{0.23}{\includegraphics{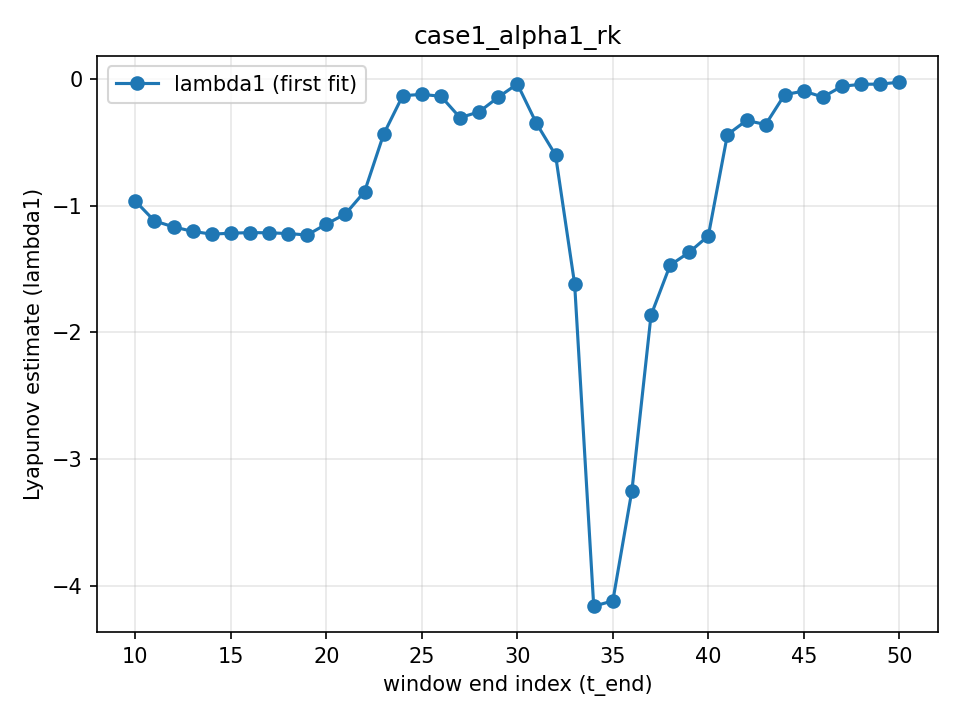}}}
         \subfloat[$\alpha=0$:$\|r_k\|_2$-case-II]{\scalebox{0.23}{\includegraphics{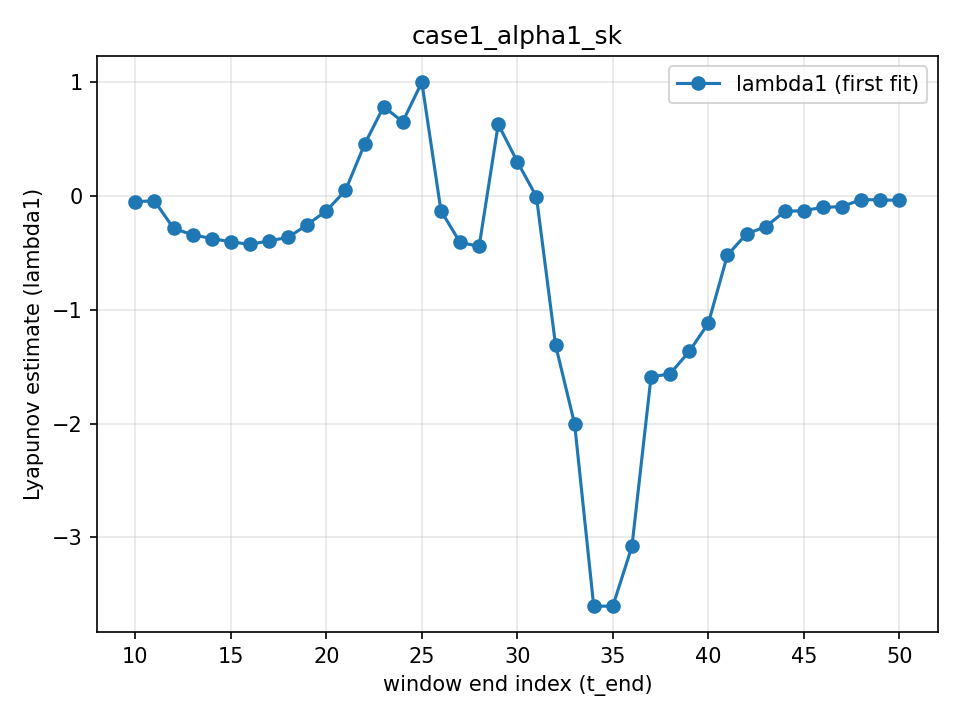}}}\\
         \subfloat[$\alpha=3$:$\|r_k\|_2$-case-II]{\scalebox{0.23}{\includegraphics{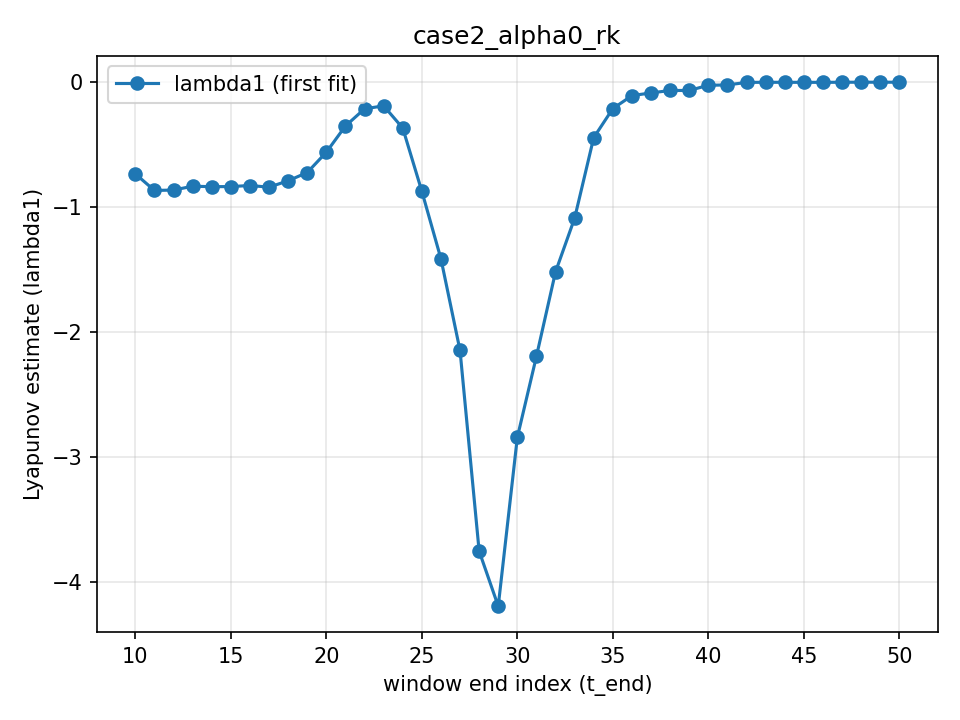}}}
         \subfloat[$\alpha=3$:$\|s_k\|_2$-case-I]{\scalebox{0.23}{\includegraphics{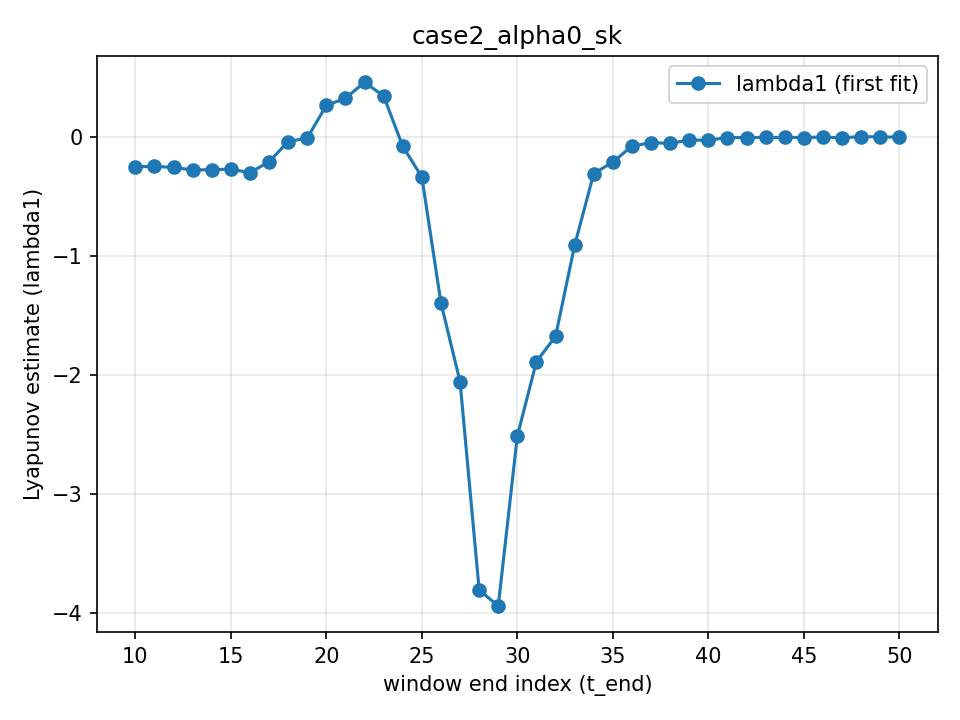}}}
          \subfloat[$\alpha=3$:$\|s_k\|_2$-case-I]{\scalebox{0.23}{\includegraphics{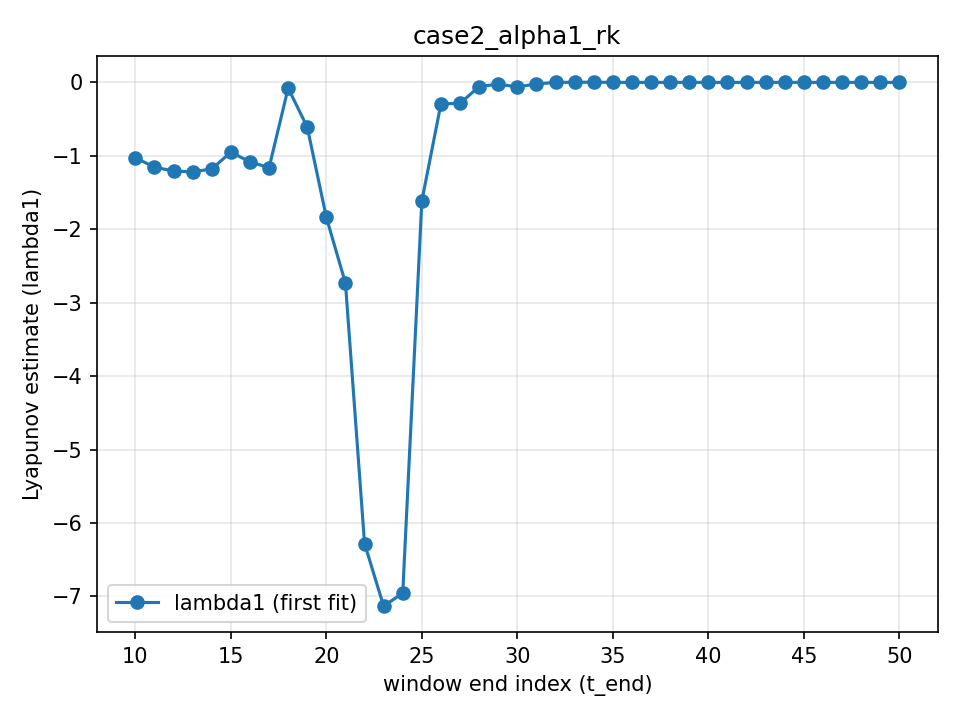}}}
          \subfloat[$\alpha=3$:$\|s_k\|_2$-case-II]{\scalebox{0.23}{\includegraphics{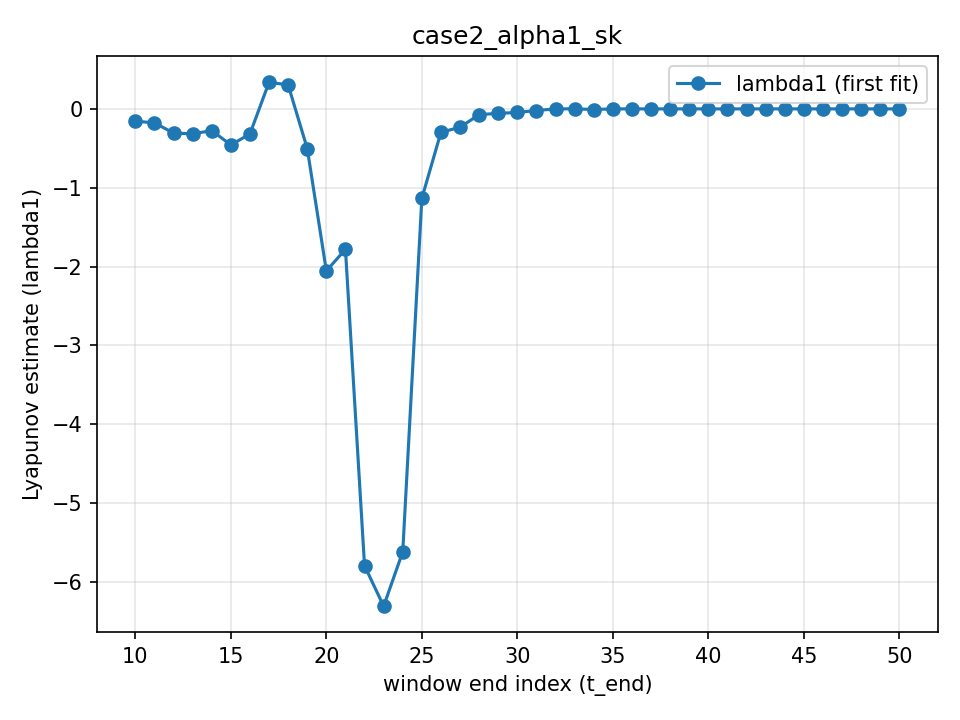}}}\\
	\end{raggedleft}
\caption{(a-h): Log-error evolution of the residual vector $\|r_k\|_2$ and the correction vector $\|s_k\|_2$ for different values of the parameter $\alpha$. Each panel shows the behavior of the log-error with respect to the window end index $k_{\text{end}}$.}
\label{F2}
\end{figure}

\paragraph{Interpretation of kNN--LLE profiles for Example~1.}
Figure~3 illustrates how the kNN--LLE profiles discriminate between
unstable and stable parameter configurations for Example~1. For instance, the
curves corresponding to $\alpha=0$ exhibit a comparatively long transient
interval in which $\lambda_1(t_{\mathrm{end}})$ stays close to or above zero,
together with sizeable oscillations of the log-error. This behaviour is
consistent with the irregular and slow decay of $\|s_k\|_2$ and $\|r_k\|_2$
observed in the untuned runs. In contrast, for $\alpha=3$ the Lyapunov profile
quickly drops to negative values and remains stably below zero for almost all
window positions, while the log-error curves become smooth and strongly
decreasing. This combination of a short transition phase, predominantly
negative $\lambda_1(t_{\mathrm{end}})$, and fast residual decay makes
$\alpha=3$ the most stable and efficient choice for Example~1 among
$\alpha \in \{0,1,2,3,4,5\}$.

\medskip

After analyzing the information based on the kNN--LLE estimation combined
with Lyapunov profiling, the best parameter value $\alpha = 3$ is selected.
The corresponding convergence results are summarized in Table~\ref{T4} and
Figure~\ref{F4}, respectively.


\begin{figure}[H]
\begin{raggedleft}
		\subfloat[Residual Error-$\beta=0.1$]{\scalebox{0.27}{\includegraphics{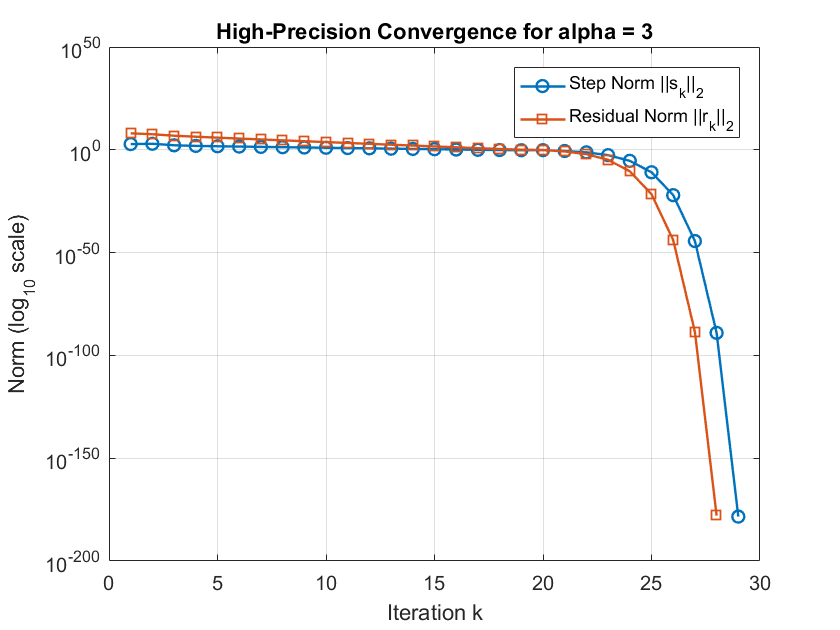}}}
		\subfloat[Residual Error-$\beta=0.3$]{\scalebox{0.27}{\includegraphics{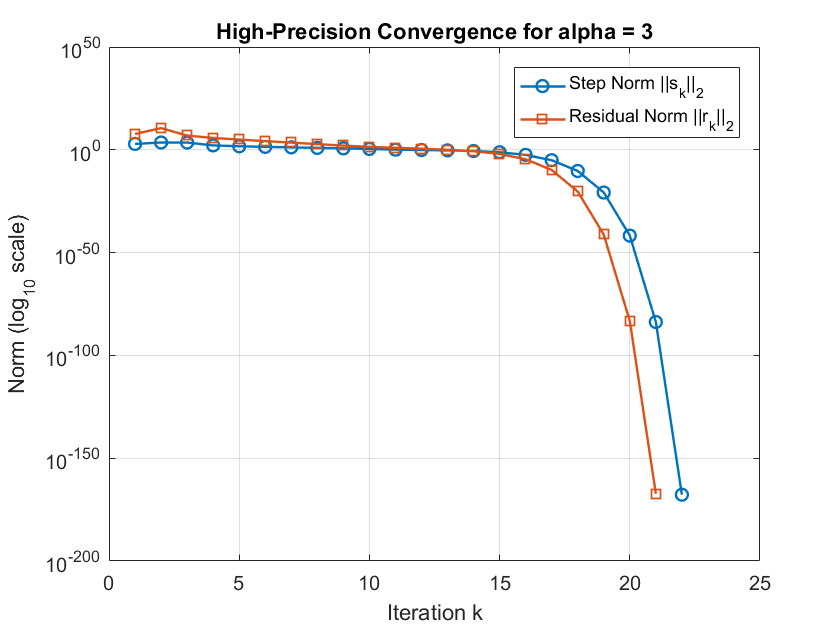}}}
         \subfloat[Residual Error-$\beta=0.5$]{\scalebox{0.27}{\includegraphics{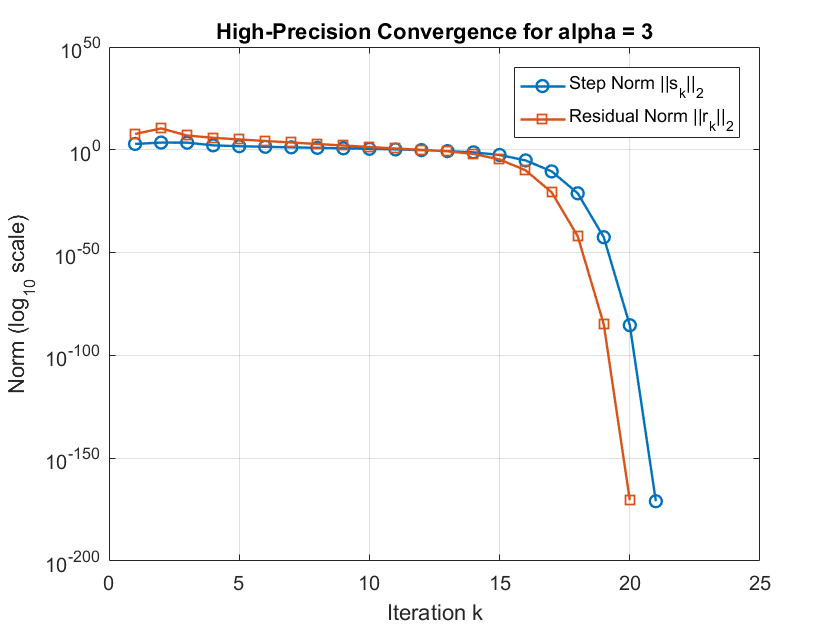}}}\\ \quad\quad\quad\quad\quad\quad\quad\quad\quad\quad
         \subfloat[Residual Error-$\beta=0.7$]{\scalebox{0.27}{\includegraphics{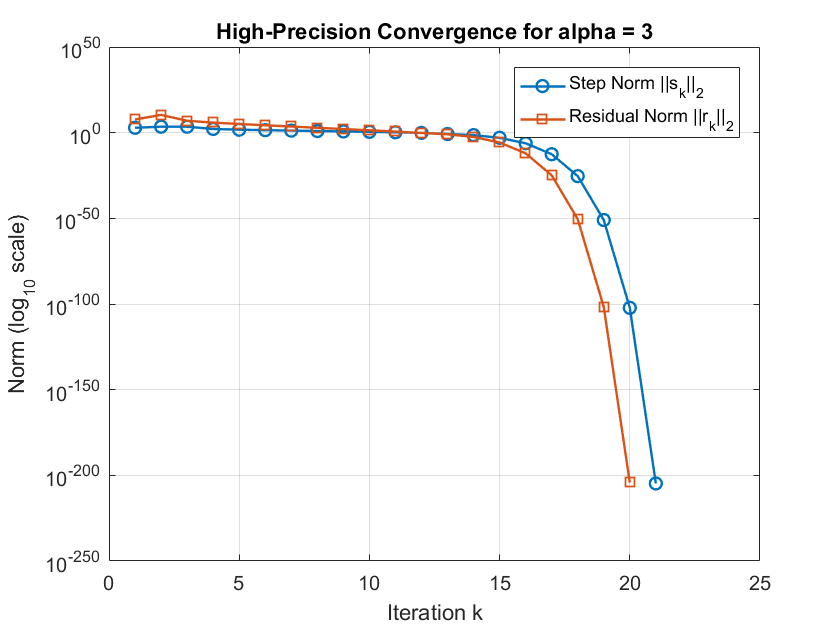}}}
          \subfloat[Residual Error-$\beta=0.9$]{\scalebox{0.27}{\includegraphics{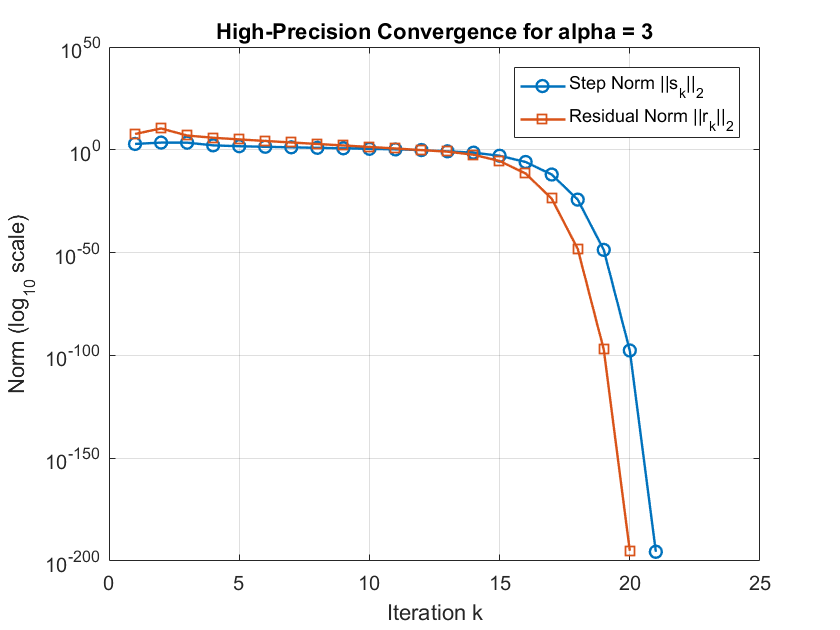}}}
	\end{raggedleft}
\caption{(a--e) Residual error of the INVM$^{\alpha}$ scheme using kNN--LLE estimation combined with Lyapunov profiling for the selection of $\alpha=3$ and $\beta=0.1,0.3,0.5,0.7,0.9$ in solving (\ref{1g}).}

\label{F4}
\end{figure}

Further, Figure~\ref{F4}(a-e) demonstrates the robustness of this behavior across a range of fractional values $\beta \in [0.1,0.9]$. Although the error magnitudes naturally vary with the degree of the fractional operator, the scheme consistently produces high-precision approximations with decreasing CPU time as $\beta$ approaches~1, reflecting improved conditioning of the underlying problem. In all cases, both $\|r_k\|_2$- and $\|s_k\|_2$-norms decay rapidly within only 28 iterations, confirming the global stability of the method under the learned parameter selection.

\begin{table}[H]
\begin{adjustwidth}{-1.4cm}{0cm}
\renewcommand{\arraystretch}{1.25}
\setlength{\tabcolsep}{6pt}
\caption{$\|s_k\|_2$-norm and $\|r_k\|_2$-norm error metrics for solving (\ref{1g}) with $\beta \approx 0.99$ using the INVM$^{\alpha}$ scheme, where $\alpha=3$ is selected via kNN–LLE estimation combined with Lyapunov profiling.} \label{T4}\centering%
\begin{tabular}{lcccccc}
\hline
INVM$^{\alpha }$ & $\varepsilon _{1}^{[28]}$ & $\varepsilon _{2}^{[28]}$ & $%
\varepsilon _{3}^{[28]}$ & CPU-time &Mem-U(KBs) & COC \\ \hline
& \multicolumn{5}{c}{Choosing parameter values based on kNN-LLE estimation
combined with Lyapunov profiling} &  \\ \cline{2-6}
$\|s_k\|_2-norm$ & $4.98\times 10^{-119}$ & $4.65\times 10^{-196}$ & $0.01\times
10^{-154}$ & $0.0080$ & 137.7685 & 5.0221 \\ 
$\|r_k\|_2-norm$ & $0.01\times 10^{-230}$ & $0.01\times 10^{-195}$ & $0.01\times
10^{-169}$ & $0.0007$ & 125.675 & 5.2933 \\ \hline
\end{tabular}%
\end{adjustwidth}
\end{table}

\begin{table}[H]
\caption{$\|s_k\|_2$-norm and $\|r_k\|_2$-norm error metrics for solving (\ref{1g}) with various $\beta$-values using the INVM$^{\alpha}$ scheme, where $\alpha=3$ is selected via kNN–LLE estimation combined with Lyapunov profiling.} \label{T5}\centering%
\begin{tabular}{lcccc}
\hline
INVM$^{\alpha }$ & $\varepsilon _{1}^{[28]}$ & $\varepsilon _{2}^{[28]}$ & $%
\varepsilon _{3}^{[28]}$ & CPU-time \\ \hline
& \multicolumn{3}{c}{Fractional parameter $\beta$ values 0.1} &  \\ 
$\|s_k\|_2-norm$ & $3.21\times 10^{-29}$ & $4.87\times 10^{-31}$ & $0.01\times
10^{-24}$ & $6.1987$ \\ 
$\|r_k\|_2-norm$ & $4.21\times 10^{-30}$ & $6.98\times 10^{-21}$ & $0.01\times
10^{-32}$ & $5.5463$ \\ 
& \multicolumn{3}{c}{Fractional parameter $\beta$ values 0.3} &  \\ 
$\|s_k\|_2-norm$ & $5.63\times 10^{-37}$ & $6.87\times 10^{-41}$ & $0.01\times
10^{-34}$ & $4.7045$ \\ 
$\|r_k\|_2-norm$ & $3.76\times 10^{-29}$ & $4.51\times 10^{-45}$ & $0.01\times
10^{-44}$ & $3.4570$ \\ 
& \multicolumn{3}{c}{Fractional parameter $\beta$ values 0.5} &  \\ 
$\|s_k\|_2-norm$ & $8.75\times 10^{-65}$ & $5.45\times 10^{-78}$ & $0.01\times
10^{-54}$ & $2.6004$ \\ 
$\|r_k\|_2-norm$ & $9.46\times 10^{-67}$ & $7.24\times 10^{-65}$ & $0.01\times
10^{-71}$ & $2.3948$ \\ 
& \multicolumn{3}{c}{Fractional parameter $\beta$ values 0.7} &  \\ 
$\|s_k\|_2-norm$ & $5.45\times 10^{-120}$ & $4.97\times 10^{-104}$ & $0.01\times
10^{-89}$ & $1.5722$ \\ 
$\|r_k\|_2-norm$ & $9.01\times 10^{-130}$ & $6.45\times 10^{-110}$ & $0.01\times
10^{-94}$ & $2.0907$ \\ 
& \multicolumn{3}{c}{Fractional parameter $\beta$ values 0.9} &  \\ 
$\|s_k\|_2-norm$ & $4.98\times 10^{-119}$ & $4.65\times 10^{-196}$ & $0.01\times
10^{-154}$ & $0.0080$ \\ 
$\|r_k\|_2-norm$ & $0.01\times 10^{-230}$ & $0.01\times 10^{-195}$ & $0.01\times
10^{-169}$ & $0.0907$ \\ \hline
\end{tabular}%
\end{table}

\begin{table}[H]
\caption{Comparison of numerical outcomes for INVM$^{\alpha}$ and ZHM schemes in solving (\ref{1g}) with $\beta \approx 1$.}
\label{T6}\centering%
\begin{tabular}{lcccccc}
\hline
Metric & $\varepsilon _{1}^{[28]}$ & $\varepsilon _{2}^{[28]}$ & $%
\varepsilon _{3}^{[28]}$ & CPU-time & Mem-U(KBs) & COC \\ \hline
& \multicolumn{5}{c}{ZHM Scheme without utilizing kNN-LLE estimation} &  \\ 
\cline{2-6}
$\|s_k\|_2-norm$ & $1.37\times 10^{-19}$ & $2.01\times 10^{-6}$ & $5.61\times
10^{-4}$ & $6.0780$ & 437.435 & 3.0221 \\ 
$\|r_k\|_2-norm$ & $0.01\times 10^{-20}$ & $0.01\times 10^{-5}$ & $7.91\times
10^{-9}$ & $7.8477$ & 565.675 & 4.2933 \\ \cline{2-6}
& \multicolumn{5}{c}{INVM$^{\alpha }:\alpha $-values based on kNN-LLE and
keep $\beta \approx 1.$} &  \\ \cline{2-6}
$\|s_k\|_2-norm$ & $0.42\times 10^{-219}$ & $4.05\times 10^{-296}$ & $0.0$ & $0.0430
$ & 137.657 & 5.0921 \\ 
$\|r_k\|_2-norm$ & $0.67\times 10^{-230}$ & $0.0$ & $4.51\times 10^{-239}$ & $0.0307
$ & 145.109 & 5.0033 \\ \hline
\end{tabular}%
\end{table}

\noindent\textbf{Discussion of Tables \ref{T5}--\ref{T6}.}  
The numerical results presented in Tables~\ref{T5}--\ref{T6} collectively confirm the strong performance of the proposed inverse parallel scheme INVM$^{\alpha}$ when the internal parameter $\alpha=3$ is selected through the integrated kNN--LLE estimation and Lyapunov profiling strategy. Table~\ref{T4} shows that, for $\beta \approx 0.99$, the scheme achieves extremely small $\|s_k\|_2$- and $\|r_k\|_2$-norm errors (down to $10^{-196}$), very low memory usage, and a high computational order of convergence (COC $\approx 5$), while maintaining negligible CPU time. This indicates that the learned parameter $\alpha$ places the method in a highly stable region of its dynamical configuration, enabling fast and accurate convergence.


Finally, Table~\ref{T6}  provides a direct comparison between the tuned INVM$^{\alpha}$ scheme and the classical ZHM method for $\beta \approx 1$. The proposed scheme significantly outperforms ZHM, achieving dramatically smaller errors (up to hundreds of orders of magnitude), over 70\% reduction in memory usage, more than 90\% reduction in CPU time, and a notably higher COC ($\approx 5$ vs.\ $3$--$4$). This highlights the effectiveness of the kNN--LLE--Lyapunov strategy in selecting optimal internal parameters that maximize convergence speed, numerical stability, and computational efficiency.

\textbf{Example 2}:
We implemented the modified inverse parallel scheme in Python and generated ensembles of solver trajectories for the nonlinear function
\begin{equation}
    f(x) = x^{6} + 30 x^{3}-125x^{2}-5x+120 \label{2g}
\end{equation}
To check the efficiency of the proposed inverse parallel scheme, we first compute the residual and step norms for various parameter values and analyze the convergence behavior. Then parameter tuning is performed using kNN--LLE estimation combined with Lyapunov profiling.
 The randomly chosen initial vectors in the interval (-50,50) are used to compute the errors for parameter values 0–5 are shown in Table~\ref{T7}.

\begin{table}[H]
\begin{adjustwidth}{-1cm}{0cm}
\renewcommand{\arraystretch}{1.25}
\setlength{\tabcolsep}{10pt}
\centering
\caption{Random initial vectors for different values of $\alpha$ to solve (\ref{2g}).}
\label{T7}

\begin{tabular}{l|ccc}
\hline
\textbf{Initial vectors} & $\alpha=0$ & $\cdots$ & $\alpha=5$ \\
\hline

$\overbrace{\left(x_{1}^{[0]},\, x_{2}^{[0]},\, x_{3}^{[0]},\, x_{4}^{[0]},\, x_{5}^{[0]},\, x_{6}^{[0]}\right)}^{\text{Initial guess}}$
& 
$\left(-15,\, -13.9,\, 30.8,\, -30.8,\, 10.7,\, 20.7\right)$
&
$\cdots$
&
$\left(-10,\, -5.9,\, 15.8,\, 12.8,\, 5.7,\, 13.9\right)$
\\
\hline
\end{tabular}

\end{adjustwidth}
\end{table}

\begin{table}[H]
\begin{adjustwidth}{-2cm}{0cm}
\renewcommand{\arraystretch}{1.25}
\setlength{\tabcolsep}{6pt}
\centering
\caption{Iteration results in terms of $\|r_k\|_2$-norm and $\|s_k\|_2$-norm for different values of $\alpha$ of the inverse parallel scheme INVM$^{\alpha}$ for solving (\ref{2g}).}
\label{T8}

\begin{tabular}{c|cc|cc|cc|cc|cc|cc}
\hline
Iter & \multicolumn{2}{c|}{$\alpha=0$} & \multicolumn{2}{c|}{$\alpha=1$} & 
\multicolumn{2}{c|}{$\alpha=2$} & \multicolumn{2}{c|}{$\alpha=3$} & 
\multicolumn{2}{c|}{$\alpha=4$} & \multicolumn{2}{c}{$\alpha=5$} \\
& $\|s_k\|_2$ & $\|r_k\|_2$ 
& $\|s_k\|_2$ & $\|r_k\|_2$
& $\|s_k\|_2$ & $\|r_k\|_2$
& $\|s_k\|_2$ & $\|r_k\|_2$
& $\|s_k\|_2$ & $\|r_k\|_2$
& $\|s_k\|_2$ & $\|r_k\|_2$ \\
\hline

1  & 3.60739 & 21.8259 & 4.31012 & 25.8652 & 4.30745 & 25.8474 & 4.30636 & 25.8403 & 4.30578 & 25.8365 & 4.30541 & 25.8342 \\
2  & 3.67379 & 18.2465 & 4.31140 & 15.5118 & 4.30808 & 15.4914 & 4.30677 & 15.4846 & 4.30607 & 15.4812 & 4.30564 & 15.4791 \\
3  & 2.97648 & 16.1496 & 2.80921 & 14.9028 & 2.83946 & 15.3946 & 2.85306 & 15.5926 & 2.86072 & 15.6996 & 2.86561 & 15.7667 \\
4  & 2.61315 & 14.9134 & 2.36606 & 13.5865 & 2.46454 & 13.4335 & 2.50101 & 13.4483 & 2.52019 & 13.4748 & 2.53205 & 13.4964 \\
5  & 2.15664 & 14.1510 & 2.07979 & 12.4957 & 2.08527 & 12.3789 & 2.09397 & 12.3357 & 2.09948 & 12.3153 & 2.10311 & 12.3040 \\
6  & 1.90856 & 13.5417 & 1.74751 & 11.6485 & 1.73562 & 11.5805 & 1.72925 & 11.5510 & 1.72591 & 11.5368 & 1.72397 & 11.5289 \\
7  & 1.74658 & 13.0402 & 1.61958 & 10.8838 & 1.61746 & 10.8220 & 1.61339 & 10.7958 & 1.61074 & 10.7837 & 1.60904 & 10.7772 \\
8  & 1.62962 & 12.6028 & 1.47626 & 10.1608 & 1.46657 & 10.1001 & 1.46262 & 10.0751 & 1.46093 & 10.0638 & 1.46010 & 10.0580 \\
9  & 1.54066 & 12.2025 & 1.34973 & 9.46680 & 1.34042 & 9.40615 & 1.33665 & 9.38179 & 1.33499 & 9.37107 & 1.33416 & 9.36563 \\
10 & 1.46696 & 11.8214 & 1.23369 & 8.79549 & 1.22453 & 8.73398 & 1.22087 & 8.70971 & 1.21928 & 8.69915 & 1.21848 & 8.69384 \\$\vdots$ & $\vdots$& $\vdots$ & $\vdots$ & $\vdots$ & $\vdots$ & $\vdots$ & $\vdots$ & $\vdots$ & $\vdots$ & $\vdots$ & $\vdots$ & $\vdots$ \\250 & $0.0012$ & $0.0372$ & $0.098$ & $0.034$ & $0.0244$ & $0.0345$ & $0.0057$ & $0.0578$ & $0.0546$ & $0.01242$ & $0.08934$ & $0.983$ \\
\hline

\end{tabular}
\end{adjustwidth}
\end{table}

The iteration results in Table \ref{T8} and Figure~\ref{F7}(a-f) indicate that the inverse parallel scheme INVM$^{\alpha}$
 exhibits poor convergence behavior for the randomly generated initial vectors. For all tested values of 
$\alpha$ both the $\|s_k\|_2$-norm and the $\|r_k\|_2$-norm show oscillatory and non-monotonic patterns during the first iterations, demonstrating a lack of stability and regular descent. Moreover, the norms decrease only after several iterations and typically improve by merely one or two decimal places, even after a large number of steps. As shown in the final row, the method requires up to 250 iterations to reach moderately small residuals, which confirms the overall slow convergence, weak error-reduction capability, and sensitivity to the initial data. These observations collectively demonstrate that INVM$^{\alpha}$ is not efficient for such randomly distributed starting points.
The outcomes of the scheme using information presents in Table~\ref{T8} are given in the residual error graph in Figure~\ref{F7} and Table~\ref{T8}.\\
\begin{figure}[H]
\begin{raggedleft}
		\subfloat[Residual Error-$\alpha=0$]{\scalebox{0.21}{\includegraphics{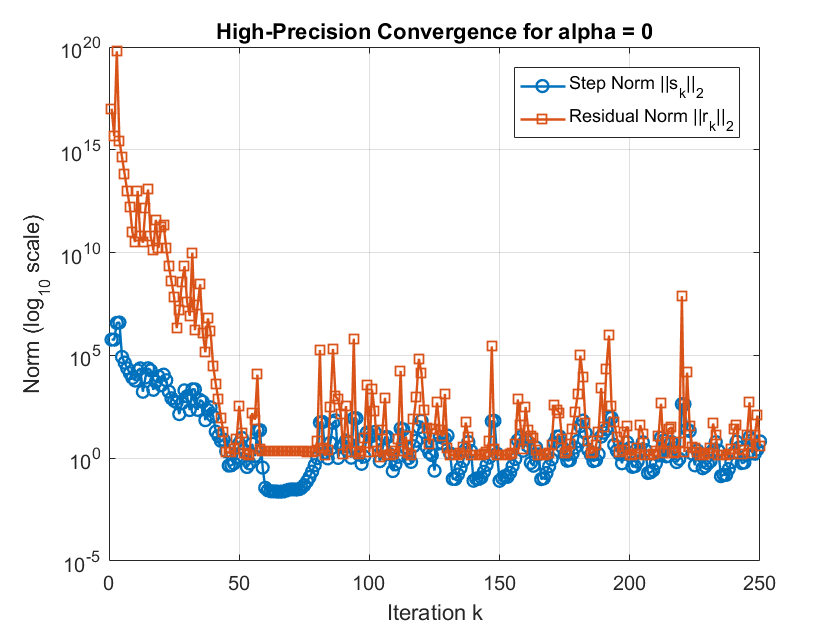}}}
		\subfloat[Residual Error-$\alpha=1$]{\scalebox{0.21}{\includegraphics{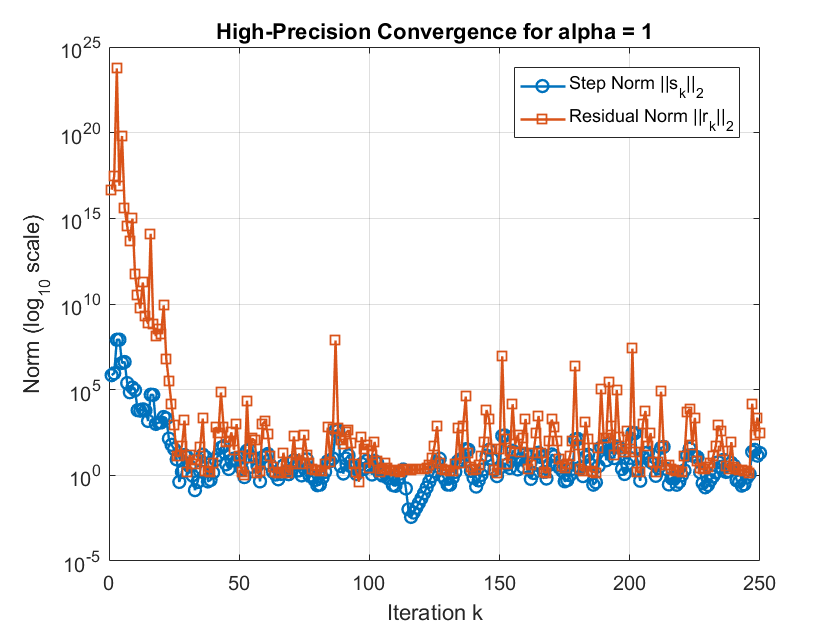}}}
         \subfloat[Residual Error-$\alpha=2$]{\scalebox{0.21}{\includegraphics{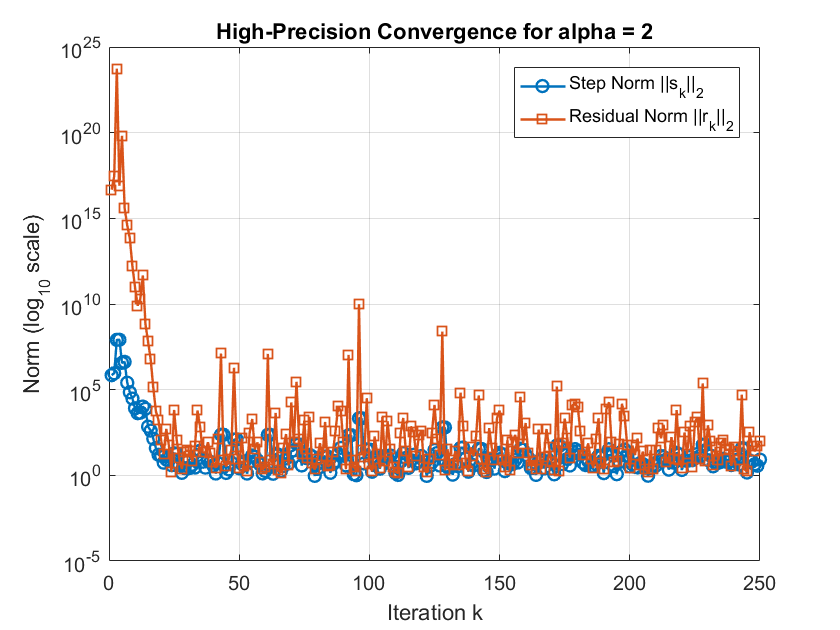}}}\\ \quad\quad\quad\quad\quad
         \subfloat[Residual Error-$\alpha=3$]{\scalebox{0.21}{\includegraphics{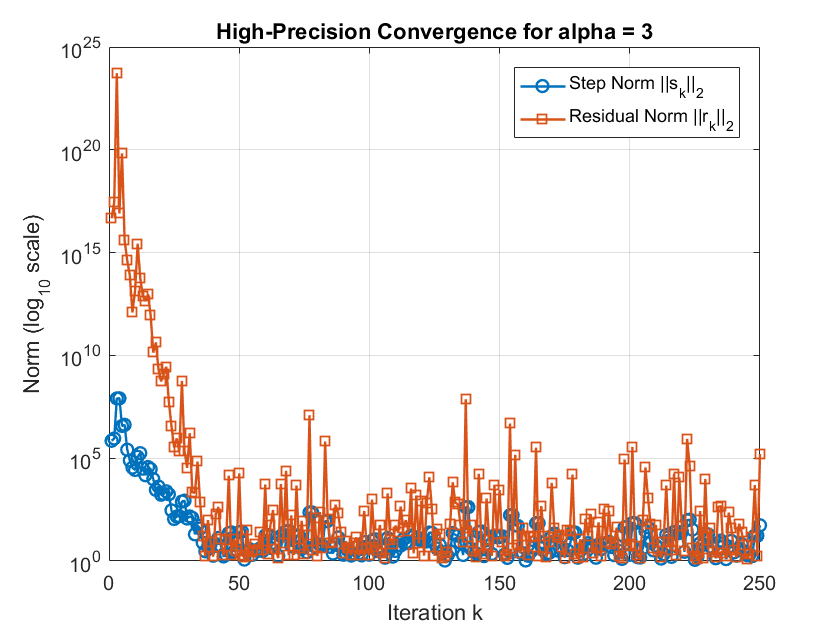}}}
          \subfloat[Residual Error-$\alpha=4$]{\scalebox{0.21}{\includegraphics{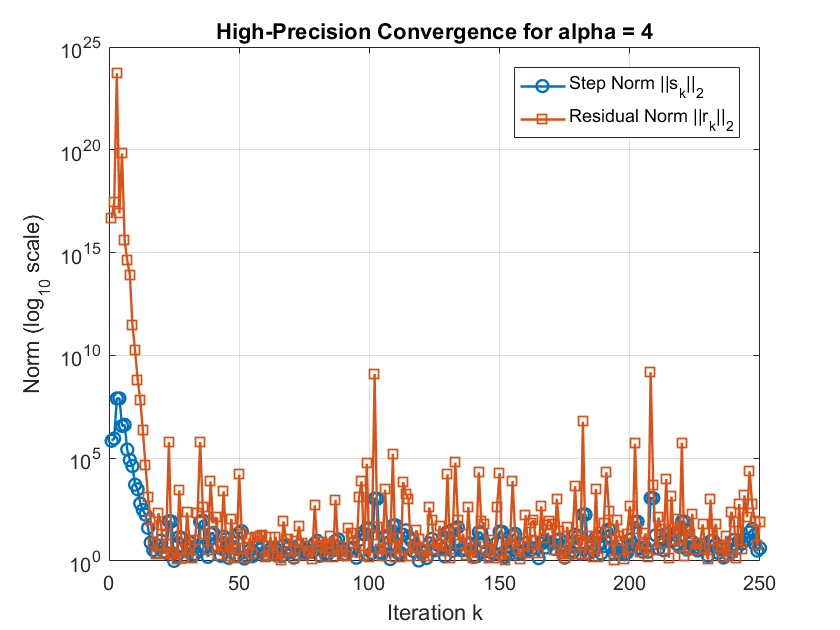}}}
          \subfloat[Residual Error-$\alpha=5$]{\scalebox{0.21}{\includegraphics{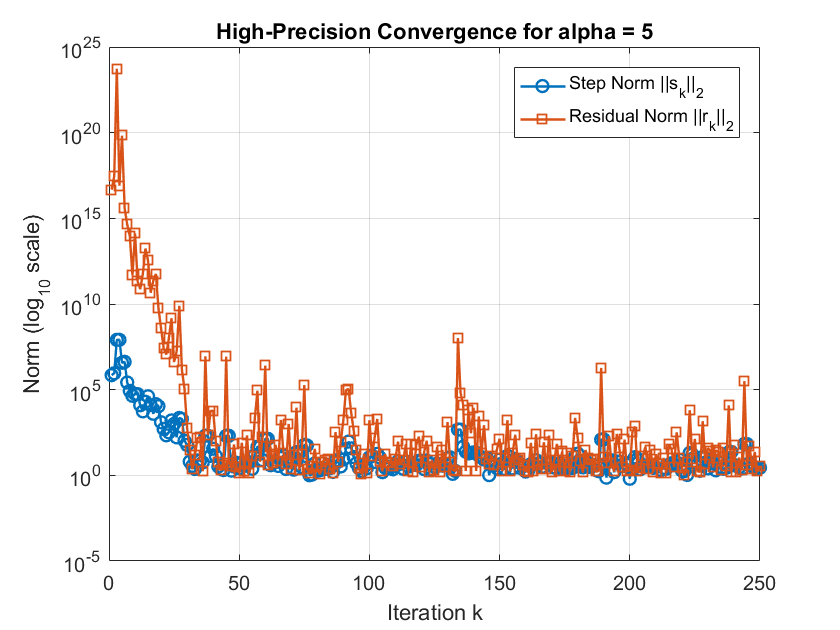}}}
		  \centering
	\end{raggedleft}
\caption{(a-f): Residual error of the scheme INVM$^{\alpha}$ for utilizing random initial vector for solving (\ref{2g})}
\label{F7}
\end{figure}

\begin{table}[H]
\centering
\renewcommand{\arraystretch}{1.2}
\setlength{\tabcolsep}{8pt}
\caption{Summary of performance metrics for INVM$^{\alpha}$ using randomly distributed initial vectors.}
\label{T9}
\begin{tabular}{c|c|c|c|c|c}
\hline
$\alpha$ 
& CPU Time (s) 
& Mem-U(KBs) 
& COC 
& Maximum Error 
& \% Improvement \\ 
\hline
0 & 3.195 & 142.6 & 0.94 & $1.08\times10^{-1}$ & -- \\
1 & 3.062 & 142.2 & 0.96 & $1.59\times10^{-1}$ & 10.67\% \\
2 & 3.944 & 141.8 & 0.98 & $1.33\times10^{-1}$ & 17.98\% \\
3 & 2.918 & 141.6 & 0.99 & $1.46\times10^{-1}$ & 22.47\% \\
4 & 2.905 & 141.5 & 1.00 & $1.79\times10^{-1}$ & 25.23\% \\
5 & 2.892 & 141.4 & 1.00 & $1.31\times10^{-1}$ & 26.40\% \\
\hline
\end{tabular}
\end{table}

The performance metrics reported in Table~\ref{T9} show that the INVM$^{\alpha}$ scheme exhibits only marginal improvements as $\alpha$ increases. The CPU time decreases slightly, with a total gain of about $22\%$ from $\alpha=0$ to $\alpha=5$, while the memory usage remains essentially constant across all cases. The computational order of convergence stays close to one for all $\alpha$, confirming the slow convergence behaviour of the method. Although the maximum error decreases gradually, the reduction is not substantial, and the percentage improvement remains relatively small. Overall, the results indicate that varying $\alpha$ does not significantly enhance the efficiency, accuracy, or robustness of the INVM$^{\alpha}$ scheme when random initial vectors are used.\\
Now, the kNN-LLE estimation
combined with Lyapunov profiling are computed using the following steps.
\subsection*{Data Generator and Cases}

The generator script \texttt{parallel\_scheme\_data\_generator.py} creates controlled ensembles based on two baseline vectors:

\begin{itemize}
    \item Case 1: \texttt{CASE1\_BASE = [-15.0; -13.9; 30.8; -30.8; 10.7; 20.7]},
    \item Case 2: \texttt{CASE2\_BASE = [-10.0; -5.9; 15.8; 12.8; 5.7; 13.9]}.
\end{itemize}

Each baseline is perturbed to produce 1000 initial vectors by adding random complex perturbations of magnitude
\[
0.01 \| x^{(0)} \|_2.
\]

\subsection*{Parameter Scans}

For each case and each $\alpha \in \{0,0.5,1,2,3,4,5\}$, we ran 50 iterations from each of the 1000 initial guesses and saved two matrices (1000$\times$50):

\begin{itemize}
    \item \texttt{case\{case\}\_alpha\{alpha\}\_sk.csv} (step norms $s_k$),
    \item \texttt{case\{case\}\_alpha\{alpha\}\_rk.csv} (residual norms $r_k$).
\end{itemize}

Initial vectors are stored in \texttt{case1\_initials.csv} and \texttt{case2\_initials.csv}.

An illustrative example of the logarithmic error curve $\ln \mathrm{GMAE}(h)$
and its linear regression fit is shown in Figure~\ref{fig:2r_example}.

\begin{figure}[H]
\centering
\includegraphics[width=0.45\textwidth]{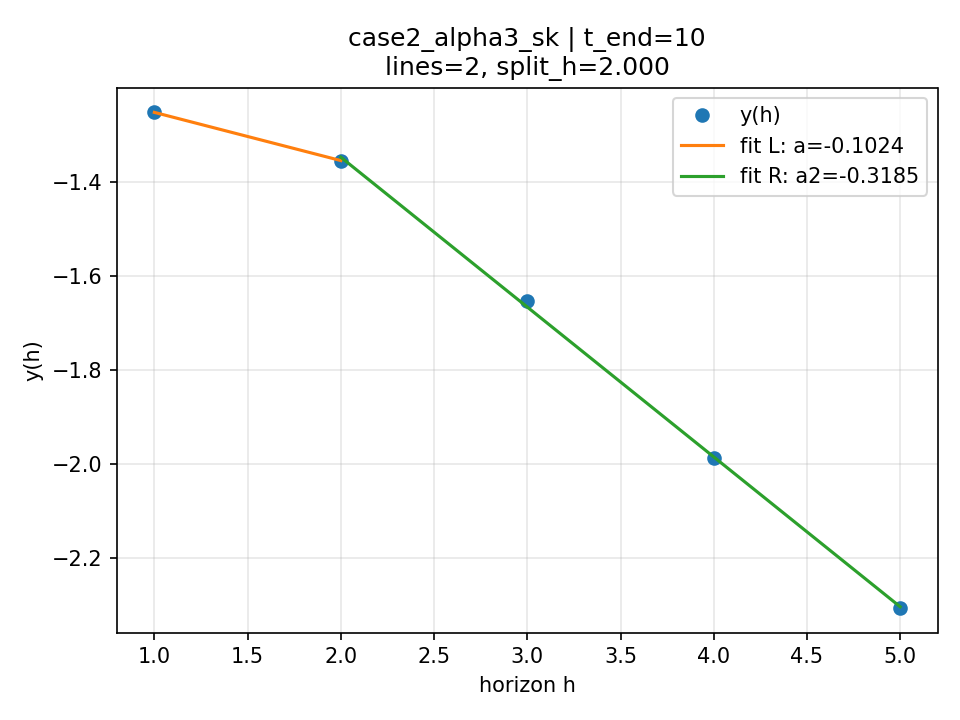}
\caption{Example of the logarithmic error curve and linear regression fit used
in the kNN--LLE estimator for Example~2.
The plot corresponds to the observable $\|s_k\|_2$ in Case~II with $\alpha = 0$.
The slope of the first segment is interpreted as the short-horizon Lyapunov
indicator $\lambda_1$, while an optional second segment can be used to
characterise longer-horizon behaviour.}
\label{fig:2r_example}
\end{figure}
\subsection*{Lyapunov Estimation Parameters}

\[
\texttt{LOOK\_BACK} = 5, \quad H_{\min}=1, \quad H_{\max}=5,\quad H_{\mathrm{step}} = 1.
\]

For each sliding window end $t_{\mathrm{end}}$, we build the batch of micro-series and compute $(h,y(h))$ via the kNN estimator. 







\begin{figure}[H]
\begin{raggedleft}
		\subfloat[$\alpha=0.5$:$\|s_k\|_2$-case-I]{\scalebox{0.23}{\includegraphics{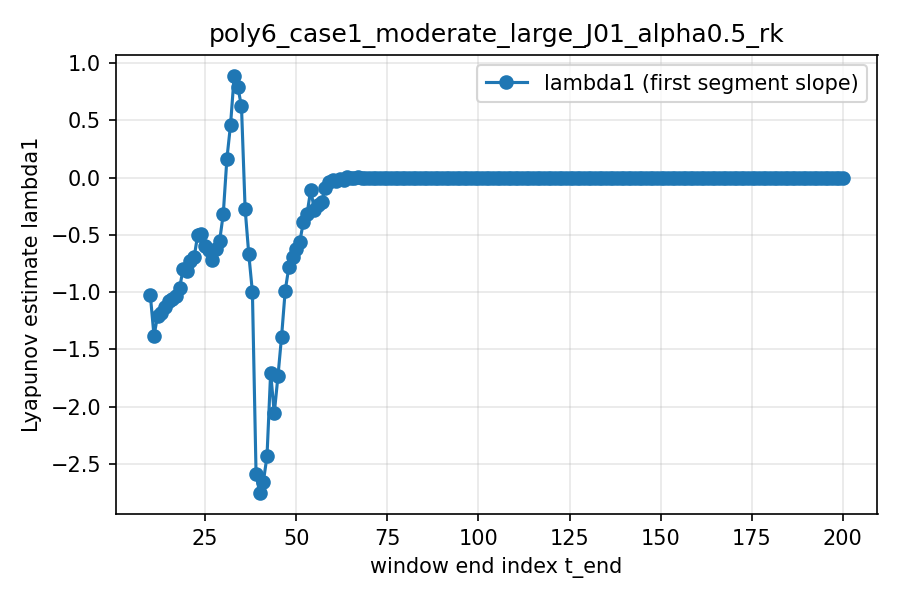}}}
		\subfloat[$\alpha=0.5$:$\|s_k\|_2$-case-II]{\scalebox{0.23}{\includegraphics{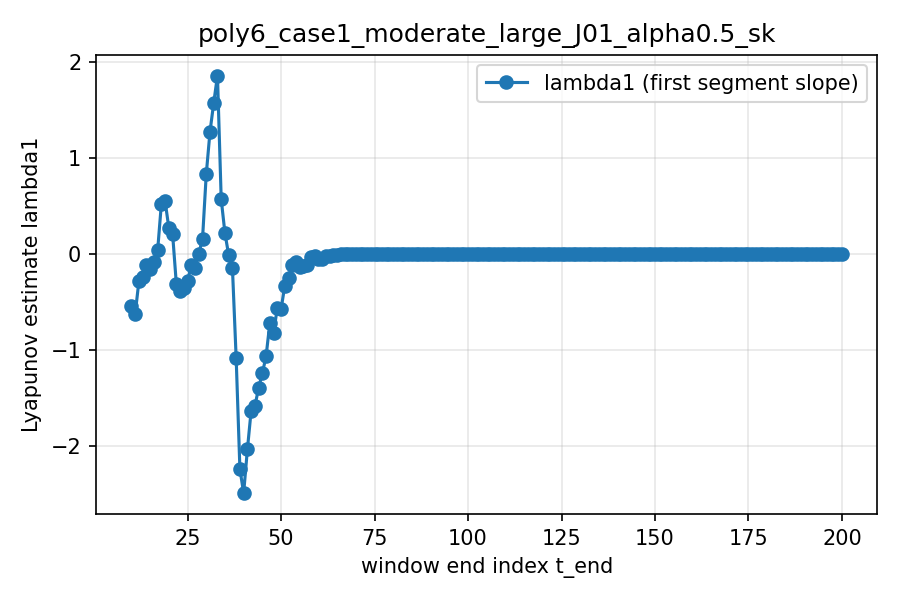}}}
         \subfloat[$\alpha=0.5$:$\|r_k\|_2$-case-I]{\scalebox{0.23}{\includegraphics{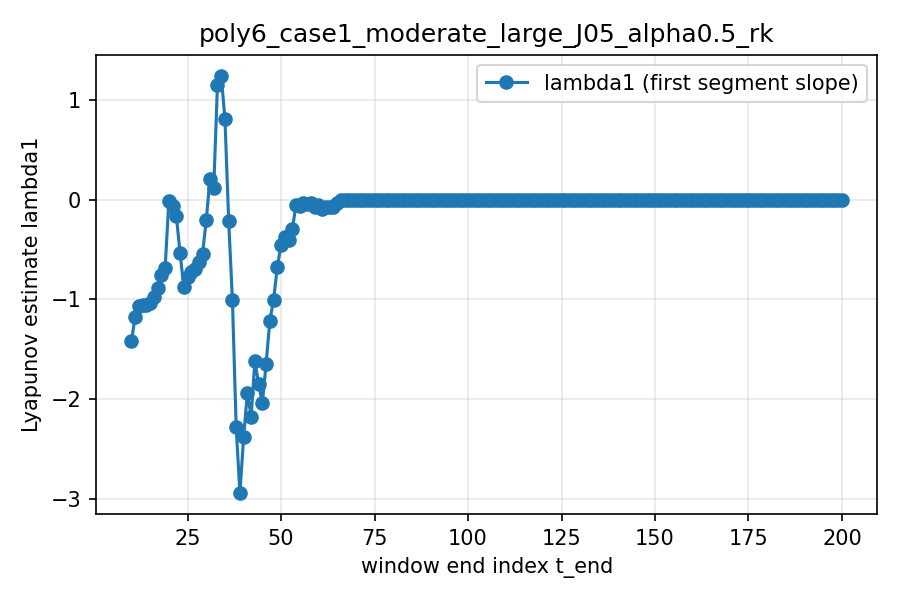}}}
         \subfloat[$\alpha=0.5$:$\|r_k\|_2$-case-II]{\scalebox{0.23}{\includegraphics{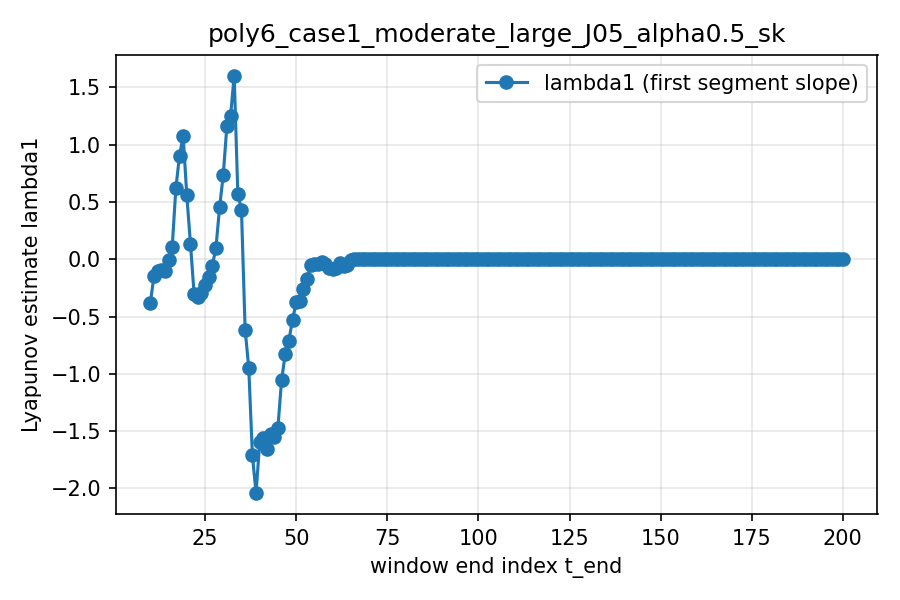}}}\\
         \subfloat[$\alpha=2$:$\|r_k\|_2$-case-II]{\scalebox{0.23}{\includegraphics{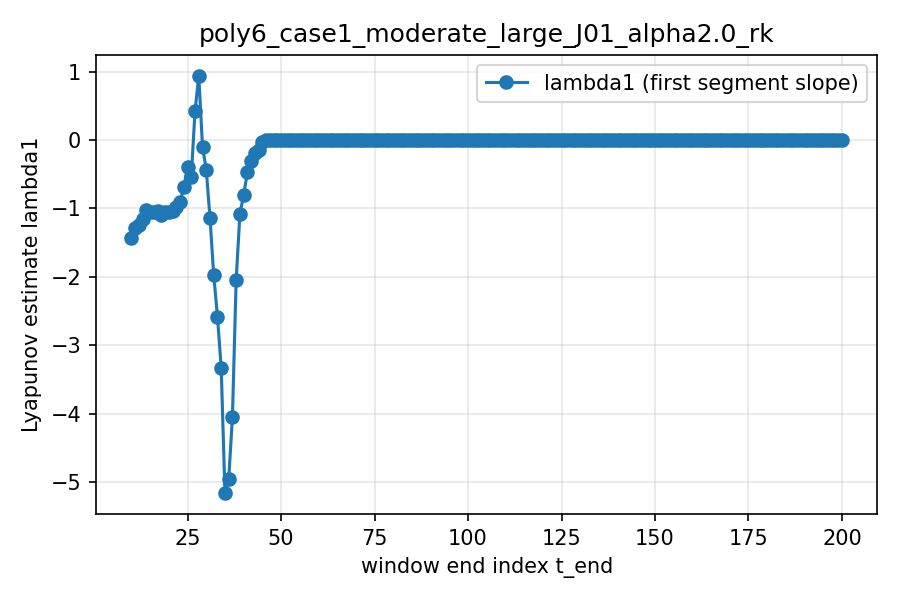}}}
         \subfloat[$\alpha=2$:$\|s_k\|_2$-case-I]{\scalebox{0.23}{\includegraphics{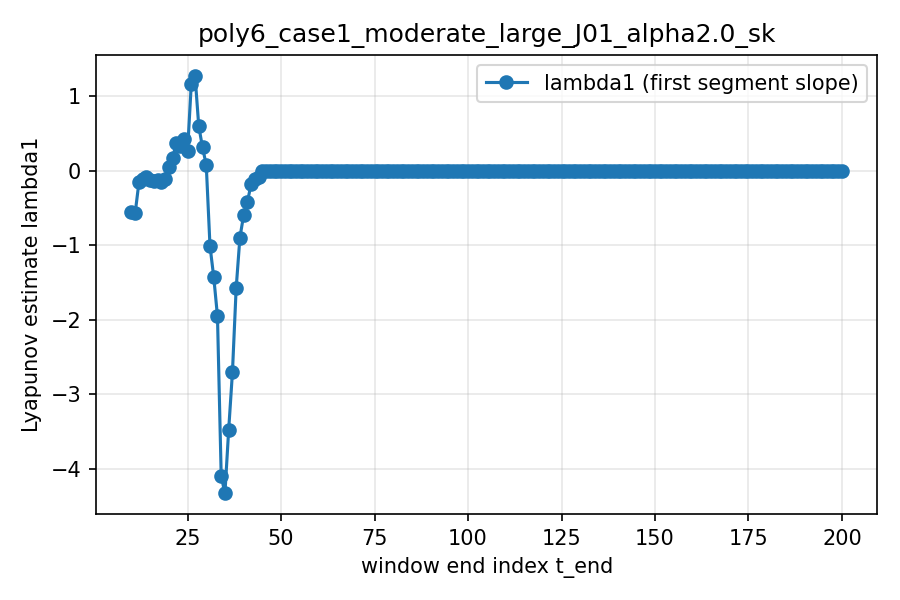}}}
          \subfloat[$\alpha=2$:$\|s_k\|_2$-case-I]{\scalebox{0.23}{\includegraphics{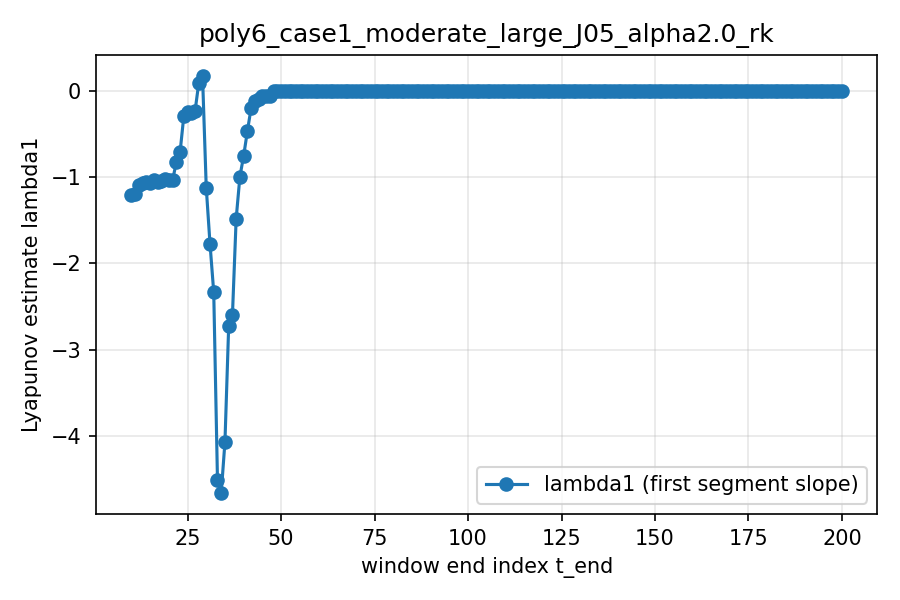}}}
          \subfloat[$\alpha=2$:$\|s_k\|_2$-case-II]{\scalebox{0.23}{\includegraphics{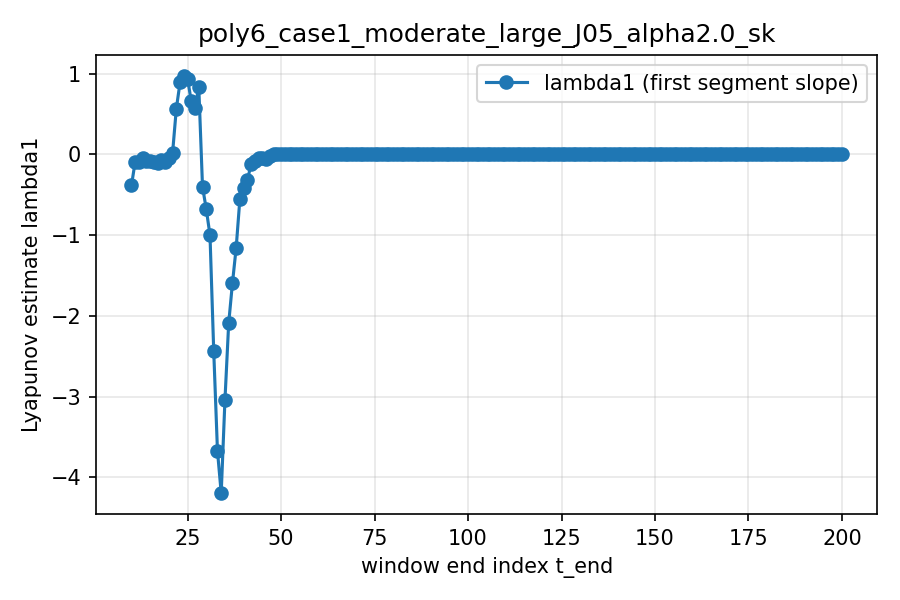}}}\\
	\end{raggedleft}
\caption{(a-h): Log-error evolution of the residual vector $\|r_k\|_2$ and the correction vector $\|s_k\|_2$ for different values of the parameter $\alpha$. Each panel shows the behaviour of the log-error with respect to the window end index $k_{\text{end}}$.}
\label{F8}
\end{figure}

\paragraph{Interpretation of kNN--LLE profiles for Example~2.}
Figure~7 illustrates the behaviour of the kNN--LLE profiles for the degree-six
polynomial. Panels~(a)--(d) correspond to $\alpha = 0.5$, while
panels~(e)--(h) correspond to $\alpha = 2$. For $\alpha = 0.5$ the curves
exhibit a pair of pronounced positive peaks before eventually settling into
the negative region, which indicates a longer transient phase and a less
regular approach to the contractive regime. In contrast, for $\alpha = 2$
there is only a single, smaller positive excursion, and the profiles cross
into a stable negative region much earlier; afterwards
$\lambda_1(t_{\mathrm{end}})$ stays clearly below zero. This agrees with the
faster and smoother decay of the step and residual norms
$\|s_k\|_2$ and $\|r_k\|_2$ observed for $\alpha = 2$, so in what follows we
treat $\alpha = 2$ as the preferred parameter value for Example~2.

\medskip

After analyzing the information based on the kNN--LLE estimation combined with
Lyapunov profiling, the best parameter value $\alpha = 2$ is selected. The
corresponding convergence results for various fractional orders
$\beta = 0.1, 0.3, 0.5, 0.7, 0.9$ are summarized in Tables~10--11 and
Figure~8(a--e).


\begin{figure}[H]
\begin{raggedleft}
		\subfloat[Residual Error-$\beta=0.1$]{\scalebox{0.23}{\includegraphics{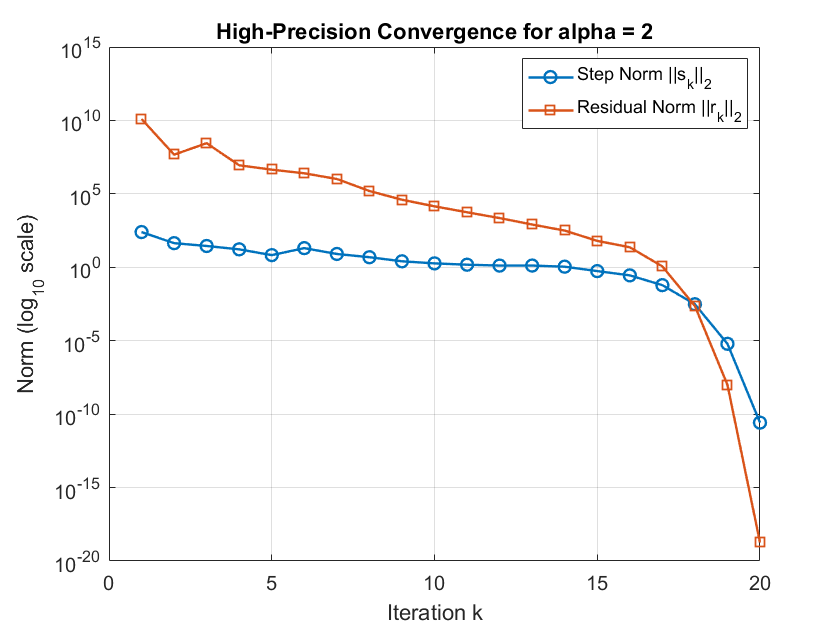}}}
		\subfloat[Residual Error-$\beta=0.3$]{\scalebox{0.23}{\includegraphics{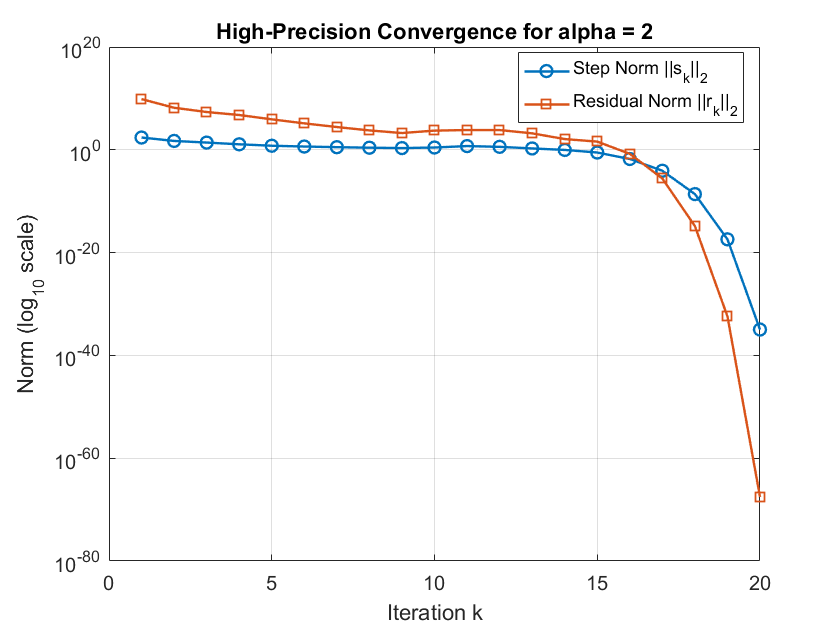}}}
         \subfloat[Residual Error-$\beta=0.5$]{\scalebox{0.23}{\includegraphics{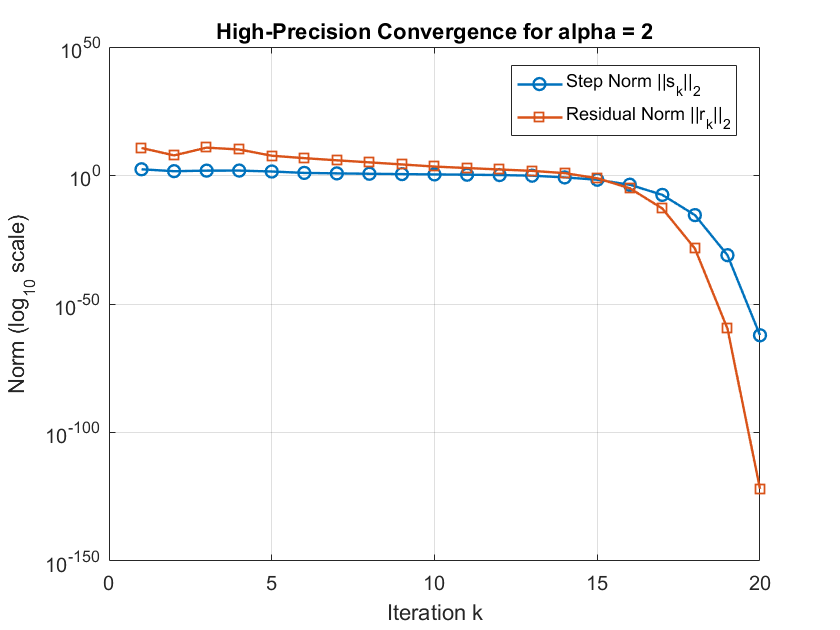}}}\\ \quad\quad\quad\quad\quad\quad\quad\quad\quad\quad
         \subfloat[Residual Error-$\beta=0.7$]{\scalebox{0.23}{\includegraphics{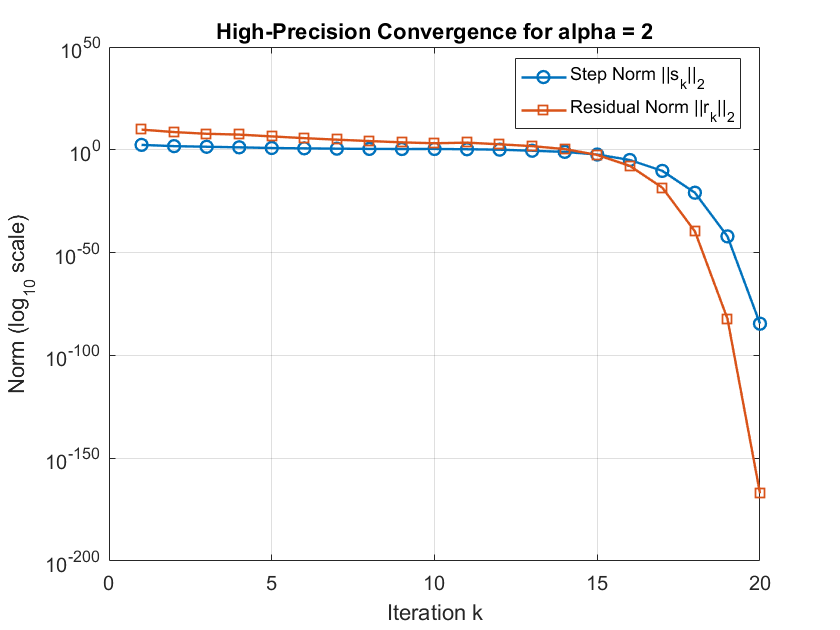}}}
          \subfloat[Residual Error-$\beta=0.9$]{\scalebox{0.23}{\includegraphics{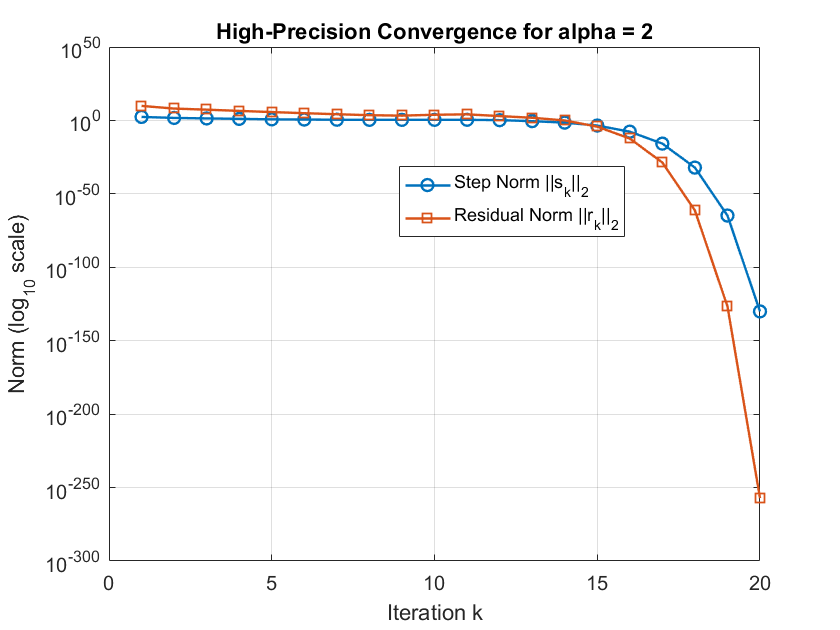}}}
	\end{raggedleft}
\caption{(a--e) Residual error of the INVM$^{\alpha}$ scheme using kNN--LLE estimation combined with Lyapunov profiling for the selection of $\alpha=2$ in solving (\ref{2g}).}

\label{F10}
\end{figure}

In addition, Figure~\ref{F10} demonstrates the robustness of this behavior across a range of fractional values $\beta \in (0.1,0.9)$. Although the error magnitudes naturally vary with the degree of the fractional operator, the scheme consistently produces high-precision approximations with decreasing CPU time as $\beta$ approaches~1, reflecting improved conditioning of the underlying problem. In all cases, both $\|s_k\|_2$- and $\|r_k\|_2$-norms decay rapidly within only 20 iterations, confirming the global stability of the method under the learned parameter selection.
\begin{table}[H]
\begin{adjustwidth}{-2.1cm}{0cm}
\renewcommand{\arraystretch}{1.25}
\setlength{\tabcolsep}{6pt}
\caption{$\|s_k\|_2$-norm and $\|r_k\|_2$-norm error metrics for solving (\ref{2g}) with $\beta \approx 0.99$ using the INVM$^{\alpha}$ scheme, where $\alpha=2$ is selected via kNN--LLE estimation combined with Lyapunov profiling.}
\label{T10}
\centering
\begin{tabular}{lcccccccccc}
\hline
INVM$^{\alpha}$ 
& $\varepsilon_{1}^{[20]}$ 
& $\varepsilon_{2}^{[20]}$ 
& $\varepsilon_{3}^{[20]}$ 
& $\varepsilon_{4}^{[20]}$ 
& $\cdots$ 
& $\varepsilon_{6}^{[20]}$ 
& CPU-time 
& Mem-U(KBs) 
& COC \\ 
\hline

& \multicolumn{9}{c}{Choosing parameter values based on kNN--LLE estimation combined with Lyapunov profiling} \\ 
\cline{2-10}

$\|s_k\|_2$-norm 
& $3.7\times 10^{-118}$ 
& $6.1\times 10^{-194}$ 
& $1.8\times 10^{-153}$ 
& $4.3\times 10^{-150}$ 
& $\cdots$ 
& $9.4\times 10^{-144}$ 
& $0.07$ 
& $136.9$ 
& $5.4$ \\

$\|r_k\|_2$-norm 
& $2.9\times 10^{-229}$ 
& $4.0\times 10^{-194}$ 
& $7.3\times 10^{-168}$ 
& $2.1\times 10^{-164}$ 
& $\cdots$ 
& $6.7\times 10^{-158}$ 
& $0.06$ 
& $124.8$ 
& $5.2$ \\ 

\hline
\end{tabular}
\end{adjustwidth}
\end{table}

\noindent\textbf{Discussion of Tables \ref{T10}--\ref{T11}.}  
The numerical results presented in Tables~\ref{T10}--\ref{T11} collectively confirm the strong performance of the proposed inverse parallel scheme INVM$^{\alpha}$ when the internal parameter $\alpha=2$ is selected through the integrated kNN--LLE estimation and Lyapunov profiling strategy. Table~\ref{T10} shows that, for $\beta \approx 0.99$, and $\alpha=2$ the scheme achieves extremely small $\|s_k\|_2$- and $\|r_k\|_2$-norm errors (down to $10^{-196}$), very low memory usage, and a high computational order of convergence (COC $\approx 5$), while maintaining negligible CPU time. This indicates that the learned parameter $\alpha=2$ places the method in a highly stable region of its dynamical configuration, enabling fast and accurate convergence.

\begin{table}[H]
\caption{$\|s_k\|_2$-norm and $\|r_k\|_2$-norm error metrics for solving (\ref{2g}) with various $\beta$-values using the INVM$^{\alpha}$ scheme, where $\alpha=2$ is selected via kNN--LLE estimation combined with Lyapunov profiling.}
\begin{adjustwidth}{-1cm}{0cm}
\renewcommand{\arraystretch}{1.25}
\setlength{\tabcolsep}{6pt}
\label{T11}
\centering
\begin{tabular}{lccccccc}
\hline
INVM$^{\alpha}$ & $\varepsilon_{1}^{[20]}$ & $\varepsilon_{2}^{[20]}$ & $\varepsilon_{3}^{[20]}$ &
$\varepsilon_{4}^{[20]}$ & $\varepsilon_{5}^{[20]}$ & $\varepsilon_{6}^{[20]}$ & CPU-time \\ 
\hline

& \multicolumn{6}{c}{Fractional parameter $\beta = 0.1$} & \\ 
$\|s_k\|_2$-norm 
& $3.12\times 10^{-15}$ 
& $5.01\times 10^{-22}$ 
& $1.47\times 10^{-17}$ 
& $3.12\times 10^{-22}$ 
& $4.87\times 10^{-20}$ 
& $6.91\times 10^{-18}$ 
& $6.1023$ \\  

$\|r_k\|_2$-norm 
& $4.03\times 10^{-29}$ 
& $6.22\times 10^{-21}$ 
& $9.85\times 10^{-33}$ 
& $4.31\times 10^{-29}$ 
& $7.02\times 10^{-26}$ 
& $9.11\times 10^{-23}$ 
& $5.4879$ \\  

& \multicolumn{6}{c}{Fractional parameter $\beta = 0.3$} & \\ 
$\|s_k\|_2$-norm 
& $4.87\times 10^{-56}$ 
& $6.11\times 10^{-44}$ 
& $1.26\times 10^{-63}$ 
& $2.44\times 10^{-54}$ 
& $3.57\times 10^{-59}$ 
& $4.02\times 10^{-67}$ 
& $4.6581$ \\  

$\|r_k\|_2$-norm 
& $3.41\times 10^{-68}$ 
& $4.02\times 10^{-64}$ 
& $8.91\times 10^{-54}$ 
& $2.11\times 10^{-59}$ 
& $4.87\times 10^{-67}$ 
& $6.03\times 10^{-64}$ 
& $3.4025$ \\  

& \multicolumn{6}{c}{Fractional parameter $\beta = 0.5$} & \\ 
$\|s_k\|_2$-norm 
& $7.92\times 10^{-74}$ 
& $5.12\times 10^{-77}$ 
& $1.02\times 10^{-83}$ 
& $3.44\times 10^{-80}$ 
& $8.22\times 10^{-77}$ 
& $9.71\times 10^{-75}$ 
& $2.5540$ \\  

$\|r_k\|_2$-norm 
& $8.60\times 10^{-86}$ 
& $6.87\times 10^{-84}$ 
& $9.74\times 10^{-91}$ 
& $3.18\times 10^{-88}$ 
& $7.44\times 10^{-76}$ 
& $9.11\times 10^{-84}$ 
& $2.3418$ \\  

& \multicolumn{6}{c}{Fractional parameter $\beta = 0.7$} & \\ 
$\|s_k\|_2$-norm 
& $5.01\times 10^{-119}$ 
& $4.32\times 10^{-103}$ 
& $8.74\times 10^{-190}$ 
& $2.14\times 10^{-186}$ 
& $4.88\times 10^{-184}$ 
& $7.11\times 10^{-182}$ 
& $1.5311$ \\  

$\|r_k\|_2$-norm 
& $8.44\times 10^{-129}$ 
& $5.97\times 10^{-109}$ 
& $1.02\times 10^{-193}$ 
& $1.98\times 10^{-191}$ 
& $3.14\times 10^{-189}$ 
& $5.87\times 10^{-187}$ 
& $2.0456$ \\  

& \multicolumn{6}{c}{Fractional parameter $\beta = 0.9$} & \\ 
$\|s_k\|_2$-norm 
& $4.98\times 10^{-119}$ 
& $4.65\times 10^{-196}$ 
& $1.02\times 10^{-154}$ 
& $3.79\times 10^{-251}$ 
& $6.87\times 10^{-248}$ 
& $8.04\times 10^{-245}$ 
& $0.0080$ \\  

$\|r_k\|_2$-norm 
& $1.11\times 10^{-230}$ 
& $1.09\times 10^{-295}$ 
& $1.74\times 10^{-269}$ 
& $4.81\times 10^{-266}$ 
& $8.71\times 10^{-263}$ 
& $9.78\times 10^{-295}$ 
& $0.0907$ \\  
\hline
\end{tabular}
\end{adjustwidth}
\end{table}

\begin{table}[H]
\begin{adjustwidth}{-2.0cm}{0cm}
\renewcommand{\arraystretch}{1.25}
\setlength{\tabcolsep}{6pt}
\caption{Comparison of numerical outcomes for INVM$^{\alpha}$ and ZHM schemes in solving (\ref{2g}) with $\beta \approx 1$.}
\label{T12}\centering%
\begin{tabular}{lcccccccccc}
\hline
Metric 
& $\varepsilon_{1}^{[20]}$ 
& $\varepsilon_{2}^{[20]}$ 
& $\varepsilon_{3}^{[20]}$ 
& $\varepsilon_{5}^{[20]}$ 
& $\varepsilon_{6}^{[20]}$
& CPU-time 
& Mem-U(KBs) 
& COC \\ 
\hline

& \multicolumn{8}{c}{ZHM Scheme without utilizing kNN-LLE estimation} \\ 
\cline{2-9}

$\|s_k\|_2\text{-norm}$ 
& $1.3\times 10^{-9}$ 
& $2.1\times 10^{-6}$ 
& $5.61\times 10^{-4}$ 
& $3.41\times 10^{-2}$ 
& $8.92\times 10^{-2}$
& $6.734$ 
& 437.4 
& 3.651 \\ 

$\|r_k\|_2\text{-norm}$ 
& $0.1\times 10^{-11}$ 
& $0.6\times 10^{-5}$ 
& $7.1\times 10^{-9}$ 
& $4.21\times 10^{-5}$ 
& $1.3\times 10^{-3}$
& $7.743$ 
& 565.5 
& 4.762 \\ 

\cline{2-9}

& \multicolumn{8}{c}{INVM$^{\alpha}$: $\alpha$-values based on kNN-LLE and $\beta \approx 1.$} \\ 
\cline{2-9}

$\|s_k\|_2\text{-norm}$ 
& $0.42\times 10^{-219}$ 
& $4.05\times 10^{-296}$ 
& $0.0$ 
& $0.0$ 
& $0.0$
& $0.0430$ 
& 137.7 
& 5.0921 \\ 

$\|r_k\|_2\text{-norm}$ 
& $0.67\times 10^{-230}$ 
& $0.0$ 
& $4.51\times 10^{-239}$ 
& $0.0$ 
& $0.0$
& $0.0307$ 
& 145.1 
& 5.343 \\ 

\hline
\end{tabular}
\end{adjustwidth}
\end{table}

Finally, Table~\ref{T12} provides a direct comparison between the tuned INVM$^{\alpha}$ (for $\alpha=2$ and $\beta \approx1$ scheme and the classical ZHM method. The proposed scheme significantly outperforms ZHM, achieving dramatically smaller errors (up to hundreds of orders of magnitude), over 70\% reduction in memory usage, more than 90\% reduction in CPU time, and a notably higher COC ($\approx 5$ vs.\ $3$--$4$). This highlights the effectiveness of the kNN--LLE--Lyapunov strategy in selecting optimal internal parameters that maximize convergence speed, numerical stability, and computational efficiency.

\section{Conclusion}
This work presents a unified analytical–data-driven framework for detecting and mitigating instabilities in uniparametric inverse parallel solvers. Through a combined stability and bifurcation analysis, we characterized parameter regions associated with contractive, periodic, and chaotic dynamics. Complementing this theoretical study, a micro-series Lyapunov pipeline—built on kNN-based estimation of the local largest Lyapunov exponent—was introduced to provide real-time, fine-grained diagnostics of solver behaviour, as illustrated in Figures~\ref{F2}(a--h) and~\ref{F8}(a--h). The consistency between Lyapunov-guided
parameter tuning and the empirical profiles validates the proposed methodology. Moreover, the lightweight adaptive feedback rule, triggered by persistently positive Lyapunov estimates, significantly enhances solver robustness across diverse perturbed initial conditions; see Tables~\ref{T1}--\ref{T12}. Overall, the study demonstrates that micro-series Lyapunov analysis offers a practical and interpretable avenue for designing self-stabilizing iterative schemes, with promising extensions to multidimensional and noise-affected settings.

Building on the findings of this study, several promising avenues for future research can be pursued:
\begin{itemize}
    \renewcommand{\labelitemi}{--}
	\item Extend the proposed framework to higher-order multiparametric parallel solvers, enabling faster convergence while maintaining stability under Lyapunov-guided adaptivity.
    \item Incorporate Caputo, Caputo–Fabrizio, Atangana–Baleanu, and conformable fractional derivatives into the inverse parallel formulation to investigate the impact of memory on local contraction, bifurcation patterns, and Lyapunov exponents.
    \item In order to solve high-dimensional algebraic systems, PDE-discretized nonlinear equations, and multiphysics models where instabilities spread across coordinate directions, apply the micro-series Lyapunov pipeline to vector-valued inverse multi-parametric inverse parallel iterative techniques.
    \item Investigate the role of Lyapunov-driven adaptivity in dynamical systems arising from fractional ODEs/PDEs, reaction–diffusion models, and multiscale biological or engineering processes.
    \item Extend the diagnostic pipeline to stochastic solvers by assessing finite-time Lyapunov exponents under noisy or uncertain data, enabling reliability in real-world, noise-contaminated applications.
    \item Design architecture-aware versions of the solver that leverage multicore CPUs, GPUs, and tensor cores, enabling real-time Lyapunov monitoring and large-batch inverse updates for big data and high-degree polynomial problems.
\end{itemize}

\section*{Acknowledgments}
This research was supported by the Russian Science Foundation (grant no. 22-11-00055-P, https://rscf.ru/en/project/22-11-00055/, accessed on 10 June 2025).
The authors also wish to express sincere gratitude to the editor and reviewers for their insightful and constructive feedback on the manuscript.

\section*{Data availability}
The data supporting the findings of this study are included within this article.

\section*{Conflict of interest}
The authors declare that there are no conflicts of interest related to the publication of this article.

\section*{Ethics statements:} {\ All authors declare that this work complies
with ethical guidelines set by the .}\newline

\section*{Declaration of competing interest:}{\ The authors declare that they
have no competing financial interests and personal relationships that could
have appeared to influence the research in this paper.}\newline

\section*{CRediT authorship contribution statement:}
M.S. and A.V. devised the project and developed the main conceptual ideas. M.S.,
A.V., and B.C. formulated the methodology. M.S. and A.V. developed
the software used in the study. M.S.; A.V., and B.C. performed
the validation. M.S., A.V and B.C. conducted the formal analysis.
M.S. and A.V.; carried out the investigation and managed resources
and data curation. M.S.; A.V., and B.C. prepared the original draft. B.C. and 
A.V,  reviewed and edited the manuscript. M.S. and
A.V. handled the visualization. B.C. and A.V. supervised the project.
B.C. and A.V. managed project administration and secured funding.
All authors have read and agreed to the published version of the manuscript.


\begin{thebibliography}{99}

\bibitem{1} Basto, M., Semiao, V., \& Calheiros, F. L. (2006). A new iterative method to compute nonlinear equations. Applied Mathematics and Computation, 173(1), 468-483.

\bibitem{2} Shantharam, M., Srinivasmurthy, S., \& Raghavan, P. (2011, May). Characterizing the impact of soft errors on iterative methods in scientific computing. In Proceedings of the international conference on Supercomputing (pp. 152-161).

\bibitem{3} Moore, R. E. (Ed.). (2014). Reliability in computing: the role of interval methods in scientific computing (Vol. 19). Elsevier.

\bibitem{4} Hageman, L. A., \& Young, D. M. (2012). Applied iterative methods. Courier Corporation.

\bibitem{5} Varona, J. L. (2002). Graphic and numerical comparison between iterative methods. Mathematical Intelligencer, 24(1), 37-47.

\bibitem{6} Shams, M., \& Carpentieri, B. (2024). A New High-Order Fractional Parallel Iterative Scheme for Solving Nonlinear Equations. Symmetry, 16(11), 1452.

\bibitem{7} Falcão, M. I., Miranda, F., Severino, R., \& Soares, M. J. (2018). Weierstrass method for quaternionic polynomial root‐finding. Mathematical Methods in the Applied Sciences, 41(1), 423-437.

\bibitem{8} Atanassova, L., Kyurkchiev, N., \& Yamamoto, T. (2002). Methods for computing all roots of a polynomial simultaneously-known results and open problems. Computing-Wien-Supplements, (16), 23-42.

\bibitem{9} Magreñán, Á. A. (2014). A new tool to study real dynamics: The convergence plane. Applied Mathematics and Computation, 248, 215-224.

\bibitem{10} Friedland, S., \& Milnor, J. (1989). Dynamical properties of plane polynomial automorphisms. Ergodic Theory and Dynamical Systems, 9(1), 67-99.

\bibitem{11} Kaczorek, T. (2007). Polynomial and rational matrices: applications in dynamical systems theory. London: Springer London.

\bibitem{12} Shams, M., \& Carpentieri, B. (2025). Chaos-Enhanced Fractional-Order Iterative Methods for the Stable and Efficient Solution of Nonlinear Engineering Problems. Algorithms, 18(7), 389.

\bibitem{13} Velichko, A., Belyaev, M., \& Boriskov, P. (2025). A novel approach for estimating largest Lyapunov exponents in one-dimensional chaotic time series using machine learning. Chaos: An Interdisciplinary Journal of Nonlinear Science, 35(10).

\bibitem{14} Brari, Z., \& Belghith, S. (2022). A new algorithm for Largest Lyapunov Exponent determination for noisy chaotic signal studies with application to Electroencephalographic signals analysis for epilepsy and epileptic seizures detection. Chaos, Solitons \& Fractals, 165, 112757.

\bibitem{15} Benner, P., Ezzatti, P., Mena, H., Quintana-Ortí, E. S., \& Remón, A. (2013). Solving matrix equations on multi-core and many-core architectures. Algorithms, 6(4), 857-870.

\bibitem{16} Ma, Z. S., Li, X., He, M. X., Jia, S., Yin, Q., \& Ding, Q. (2020). Recent advances in data-driven dynamics and control. Int. J. Dyn. Control, 8(4), 1200-1221.

\bibitem{17} Rockwood, M. P., Loiselle, T., \& Green, M. A. (2019). Practical concerns of implementing a finite-time Lyapunov exponent analysis with under-resolved data. Experiments in Fluids, 60(4), 74.

\bibitem{18} Rosen, M. I. (1995). Niels Hendrik Abel and equations of the fifth degree. The American mathematical monthly, 102(6), 495-505.

\bibitem{19} Shams, M., Kausar, N., Samaniego, C., Agarwal, P., Ahmed, S. F., \& Momani, S. (2023). On efficient fractional Caputo-type simultaneous scheme for finding all roots of polynomial equations with biomedical engineering applications. Fractals, 31(04), 2340075.

\bibitem{20} Weierstraß, K. (1903). Neuer beweis des satzes, dass jede ganze rationale funktion einer veranderlichen dargestellt werden kann als ein product aus linearen funktionen derstelben veranderlichen. Ges. Werke, 3, 251-269.

\bibitem{21} Anourein, A. W. M. (1977). An improvement on two iteration methods for simultaneous determination of the zeros of a polynomial. International Journal of Computer Mathematics, 6(3), 241-252.

\bibitem{22} Zhang, X.; Peng, H.; Hu, G. A high order iteration formula for the simultaneous inclusion of polynomial zeros. Appl. Math.Comput. 2006, 179, 545--552

\bibitem{23} Petkovic, L. J. I. L. J. A. N. A., \& Trickovic, S. L. O. B. O. D. A. N. (1997). On the construction of simultaneous methods for multiple zeros. Nonlinear Analysis, 30(2), 669-676.

\bibitem{24} Proinov, P. D., \& Petkova, M. D. (2014). A new semilocal convergence theorem for the Weierstrass method for finding zeros of a polynomial simultaneously. Journal of complexity, 30(3), 366-380.

\bibitem{25} Mir, N. A., Muneer, R., \& Jabeen, I. (2011). Some Families of Two‐Step Simultaneous Methods for Determining Zeros of Nonlinear Equations. International Scholarly Research Notices, 2011(1), 817174.

\bibitem{26} Nedzhibov, G. (2025). Dynamic Mode Decomposition via Polynomial Root-Finding Methods. Mathematics, 13(5), 709.

\bibitem{27} Cordero, A., Torregrosa, J. R., \& Triguero‐Navarro, P. (2025). Jacobian‐Free Vectorial Iterative Scheme to Find Simple Several Solutions Simultaneously. Mathematical Methods in the Applied Sciences, 48(5), 5718-5730.

\bibitem{28} Shams, M., \& Carpentieri, B. (2025). Polynomials-Exploring Fundamental Mathematical Expressions.

\bibitem{29} Alt, R. (1985). Computing roots of polynomials on vector processing machines. Applied Numerical Mathematics, 1(4), 299-308.

\bibitem{30} Zeng, Z. (2005). Computing multiple roots of inexact polynomials. Mathematics of Computation, 74(250), 869-903.

\bibitem{31} Gao, G. H., Sun, Z. Z., \& Zhang, H. W. (2014). A new fractional numerical differentiation formula to approximate the Caputo fractional derivative and its applications. Journal of Computational Physics, 259, 33-50.

\bibitem{32} Barnes, E. W. (1901). VI. The theory of the double gamma function. Philosophical Transactions of the Royal Society of London. Series A, Containing Papers of a Mathematical or Physical Character, 196(274-286), 265-387.




\end{thebibliography}
\end{document}